\theoremstyle{plain}
\newtheorem{thm}{\protect\theoremname}
\theoremstyle{plain}
\newtheorem{lem}[thm]{\protect\lemmaname}
\theoremstyle{plain}
\newtheorem{prop}[thm]{\protect\propositionname}
\theoremstyle{definition}
\newtheorem{defn}[thm]{\protect\definitionname}
\newtheorem{ex}[thm]{Example}
\newenvironment{lyxlist}[1]
	{\begin{list}{}
		{\settowidth{\labelwidth}{#1}
		 \setlength{\leftmargin}{\labelwidth}
		 \addtolength{\leftmargin}{\labelsep}
		 }}
	{\end{list}}
\theoremstyle{plain}
\providecommand{\corollaryname}{Corollary}
\providecommand{\definitionname}{Definition}
\providecommand{\lemmaname}{Lemma}
\providecommand{\propositionname}{Proposition}
\providecommand{\theoremname}{Theorem}
\newcommand{\FF}{\mathbb F}
\newcommand{\PP}{\mathbb P}
\newcommand{\QQ}{\mathbb Q}
\newcommand{\ZZ}{\mathbb Z}
\providecommand{\corollaryname}{Corollary}
\providecommand{\definitionname}{Definition}
\providecommand{\lemmaname}{Lemma}
\providecommand{\propositionname}{Proposition}
\providecommand{\theoremname}{Theorem}
\begin{document}
\title{\large \textbf{CHARACTERISTIC SETS OF MATROIDS}}
\author{\small DUSTIN CARTWRIGHT AND DONY VARGHESE}
\date{}
\maketitle
\begin{abstract}
We investigate possible linear, algebraic, and Frobenius flock characteristic
sets of matroids. In particular, we classify possible combinations of linear and
algebraic characteristic sets when the algebraic characteristic set is finite or
cofinite. We also show that the natural density of an algebraic characteristic set in the set of primes may be arbitrarily close to any real number in the unit interval.

Frobenius flock realizations can be constructed from algebraic realizations, but
the converse is not true. We show that the algebraic characteristic set may be
an arbitrary cofinite set even for matroids whose Frobenius flock
characteristic set is the set of all primes. In addition, we construct Frobenius
flock realizations in all positive characteristics from linear realizations in
characteristic~0, and also from Frobenius flock realizations of the dual
matroid.
\end{abstract}

\section{Introduction}

A matroid is a combinatorial structure generalizing
the concept of linear independence of vectors
in a vector space \cite{Whitney1935}. Explicitly, it is equivalent to the
collection of all linearly independent subsets of a fixed set of vectors in a
vector space. However, not all matroids have
linear representations, and the existence of a linear representation
can depend on the field. The linear characteristic set
of a matroid $M$, which we denote $\chi_L(M)$, is the set of characteristics of
those fields over which $M$ does have a linear representation. The linear
characteristic set of
a matroid is either a finite set of positive primes or a cofinite
set containing~$0$ \cite{Rado1957,Vamos1971}. Conversely, any
such set occurs as the linear characteristic set of some matroid~\cite{Kahn1982,reid}.

Similar to the linear independence in a vector space, algebraic independence
in a field extension also defines a matroid. For a matroid $M$ on
a set $E$, an algebraic representation over $K$ is a pair $\left(L,\phi\right)$
consisting of a field extension $L$ of $K$ and a map $\phi\colon E\rightarrow L$
such that any $I\subseteq E$ is independent in $M$ if and only if
the set $\phi(I)$ is algebraically independent over $K$. If a matroid
has a linear representation over a field $K$, then it also has an
algebraic representation over $K$. Conversely, an algebraic representation
over a field of characteristic~$0$ can be turned into a linear representation
over a field of characteristic~$0$ by using derivations. However, there are
matroids with algebraic representations in positive characteristic, but not
linear representations~\cite{Lindstrom1986}.

The algebraic characteristic set of a matroid $M$, denoted by $\chi_{A}(M)$, is
the set of characteristics of the fields in which a matroid $M$ has algebraic
representations. A classification analogous to that for linear characteristic
sets is not known. Nonetheless, our first result shows that the algebraic
characteristic set can be an essentially arbitrary finite or cofinite set, with
a restriction only for characteristic 0:
\begin{thm}
\label{thm:char-sets} Let $C_{L}\subseteq C_{A}\subseteq\mathbb{P}\cup\left\{ 0\right\} $
be finite or cofinite subsets. Suppose either that $0\in C_{L}$
and $C_{L}$ is cofinite, or that $0\notin C_{A}$ and $C_{L}$
is finite. Then there exists a matroid $M$ such that $\chi_{L}(M)=C_{L}$ and $\chi_{A}(M)=C_{A}$.
\end{thm}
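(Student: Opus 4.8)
The plan is to reduce the construction to a short list of ``gadget'' matroids combined by direct sums. Throughout we use that for a direct sum one has $\chi_L(M_1\oplus M_2)=\chi_L(M_1)\cap\chi_L(M_2)$ and $\chi_A(M_1\oplus M_2)=\chi_A(M_1)\cap\chi_A(M_2)$: the first is block‑diagonality of representing matrices, and for the second one restricts an algebraic representation of the sum to the ground set of each summand for the inclusion ``$\subseteq$'', while for ``$\supseteq$'' one places the two given field extensions inside a common overfield in which they are algebraically independent over $K$. Combined with the facts recalled in the introduction — a linear representation yields an algebraic one in the same characteristic, and in characteristic~$0$ the two notions coincide — and with the Rado--V\'amos dichotomy, this shows that $C_L\subseteq C_A$, that $0\in C_A$ exactly when $0\in C_L$, and that a cofinite linear characteristic set must contain~$0$; hence the two hypotheses of the theorem are precisely the combinations that have any chance of occurring, and it suffices to treat them.

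\emph{Case 1: $0\in C_L$, so that $C_L$ and $C_A$ are both cofinite.} Write $S_L=\PP\setminus C_L$ and $S_A=\PP\setminus C_A$, finite sets of primes with $S_A\subseteq S_L$. I would take
\[
M=\Bigl(\bigoplus_{p\in S_L\setminus S_A}E_p\Bigr)\ \oplus\ \Bigl(\bigoplus_{p\in S_A}D_p\Bigr),
\]
where $E_p$ is a matroid with $\chi_L(E_p)=(\PP\cup\{0\})\setminus\{p\}$ but $\chi_A(E_p)=\PP\cup\{0\}$ — linear representability lost only at $p$, algebraic representability kept everywhere — and $D_p$ is a matroid with $\chi_L(D_p)=\chi_A(D_p)=(\PP\cup\{0\})\setminus\{p\}$. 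Intersecting characteristic sets gives $\chi_L(M)=(\PP\cup\{0\})\setminus S_L=C_L$ and $\chi_A(M)=(\PP\cup\{0\})\setminus S_A=C_A$, as wanted. For $E_2$ the natural candidate is the non-Fano matroid: it is linearly representable exactly away from characteristic~$2$, and the single linear dependency it forbids can nevertheless be realized algebraically in characteristic~$2$ via a Frobenius ($p$-th power) substitution, so it stays algebraic there. For general $p$ one needs a ``generalized non-Fano'' whose linear realizability amounts to ``$p$ is invertible'' but whose forbidden relation is still algebraically realizable in characteristic~$p$, and for $D_p$ one needs, conversely, a matroid whose obstruction survives the passage to algebraic representations — genuinely not algebraic in characteristic~$p$, yet linear, hence algebraic, in every other characteristic. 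Producing these families, and in particular deciding for each prime whether the relevant forbidden minor retains an algebraic realization, is the main content of this case.

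\emph{Case 2: $0\notin C_A$ and $C_L$ finite.} Here $C_L$ is a finite set of primes, $C_L\subseteq C_A$, and $C_A$ is finite or cofinite in $\PP$ with $0\notin C_A$. The new difficulty is that direct sums can only shrink characteristic sets, so the per‑prime gadgets of Case~1 cannot pin $\chi_L$ down to a prescribed finite set; this requires a single construction in the spirit of Kahn and Reid in which linear realizability forces the characteristic into $C_L$, yet the matroid carries a genuinely non-linear algebraic realization in each extra characteristic of $C_A\setminus C_L$ and admits no realization of any kind in characteristics outside $C_A$. When $C_A$ is finite, this single construction is the whole task. When $C_A$ is cofinite in $\PP$, I would instead first build a matroid $N$ with $\chi_L(N)=C_L$ and $\chi_A(N)=\PP$ — algebraically representable in every positive characteristic but, crucially, not in characteristic~$0$ — and then set $M=N\oplus\bigoplus_{p\in\PP\setminus C_A}D_p$, with the $D_p$ of Case~1, which trims $\chi_A$ to $C_A$ while leaving $\chi_L(M)=C_L$ (no prime of $\PP\setminus C_A$ lies in $C_L$).

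The step I expect to be the real obstacle lies in these custom matroids: on one side, a matroid that is linearly representable in only finitely many prescribed characteristics yet algebraically representable in \emph{all} positive characteristics — one exhibiting Lindstr\"om's phenomenon uniformly in $p$ while still tightly controlling $\chi_L$; and on the other, the $D_p$ and the Case~2 gadgets, which must instead provably \emph{fail} to be algebraic in a prescribed characteristic. Both hinge on deciding when an algebraic (or Frobenius‑flock) realization of a forbidden minor can be arranged and when it cannot — exactly the point at which one expects the paper's Frobenius‑flock methods to be needed.
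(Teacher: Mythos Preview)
Your direct-sum reduction is sound and matches the skeleton the paper uses: build a small stock of matroids with prescribed pairs $(\chi_L,\chi_A)$ and intersect by summing. The paper also treats the three cases (both cofinite; $C_A$ cofinite with $C_L$ finite; both finite) by summing two pieces, though it parametrizes each piece by the whole target set (via the product $n$ of the relevant primes) rather than prime by prime. Your per-prime gadgets $E_p$, $D_p$ would work just as well at the reduction level, and your Case~2 matroid $N$ with $\chi_L(N)=C_L$ finite and $\chi_A(N)=\PP$ is exactly one of the paper's building blocks.

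The genuine gap is that you do not construct any of these gadgets, and you say so. That construction is the entire content of the theorem; the bookkeeping is the easy part. The decisive tool---which you do not mention---is a lemma of Evans and Hrushovski: from a finite system of equations $x_i=x_j+x_k$, $x_i=x_jx_k$ (with $x_0=0$, $x_1=1$, all $x_i$ distinct) one obtains a matroid whose linear realizations over an infinite field $K$ correspond to solutions in $K$, and whose \emph{algebraic} realizations in characteristic $p$ correspond to solutions in the division ring of endomorphisms of a one-dimensional connected algebraic group over an algebraically closed field of characteristic $p$. Since $\mathrm{End}(G_a)$ in characteristic $p$ is the non-commutative twisted polynomial ring $\overline{\FF}_p[F]$, one can engineer systems whose commutative solutions force $n=0$ in the field (pinning $\chi_L$) while a non-commutative solution in $\overline{\FF}_p[F]$ exists for every $p$ (giving $\chi_A=\PP$); and adding an equation like $u^3+u=1$ rules out $G_m$ and elliptic curves (their endomorphisms have degree $\le 2$ over $\QQ$), forcing $\chi_A=\chi_L$. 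Your closing expectation that Frobenius-flock methods would be needed here is off target: the paper's proof of this theorem uses only the Evans--Hrushovski translation into endomorphism rings; flocks enter only in the later theorems.
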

\noindent Here, and throughout the paper, $\mathbb{P}$ denotes the set of all primes.

Unlike the linear characteristic set, the algebraic characteristic set of a
matroid may be neither finite nor
cofinite~\cite[Ex.~2]{EvansHrushovski}. We extend this example to show
that the possible densities of the algebraic characteristic set are dense in the
interval $[0,1]$:

\begin{thm}
\label{thm:char-density}
Let $0 \leq \alpha \leq 1$ be a real number and
$\epsilon > 0$. Then there exists a matroid $M$ such that $\lvert
d(\chi_{A}(M))-\alpha\rvert <\epsilon$, where $d(\chi_A(M))$ refers to the
natural density of $\chi_A(M)$ in the set of all primes.
\end{thm}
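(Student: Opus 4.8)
The plan is to realize algebraic characteristic sets of the shape
$S_{N,D}:=\{\,p\in\PP : \mathrm{ord}_N(p)\in D\,\}$, where $\mathrm{ord}_N(p)$ denotes the multiplicative order of $p$ in $(\ZZ/N\ZZ)^{\times}$ and $D$ is a prescribed set of divisors of the exponent of $(\ZZ/N\ZZ)^{\times}$, and then to tune $N$ and $D$ so that $d(S_{N,D})$ lies within $\epsilon$ of $\alpha$. By Dirichlet's theorem (equivalently, Chebotarev for $\QQ(\zeta_N)/\QQ$), the density of $S_{N,D}$ equals $\phi(N)^{-1}$ times the number of elements of $(\ZZ/N\ZZ)^{\times}$ whose order belongs to $D$, because $\mathrm{ord}_N(p)$ depends only on $p \bmod N$. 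Taking $N$ to be the product of all primes up to a large bound, the multiset of these order-counts is fine-grained (its largest gap is polynomial in that bound while $\phi(N)$ is exponential in it), so its subset sums form an arbitrarily fine net of $[0,\phi(N)]$; hence the attainable values $d(S_{N,D})$ are dense in $[0,1]$. This purely number-theoretic estimate, together with the trivial fact that discarding finitely many primes does not change a density, is all that is needed to reduce Theorem~\ref{thm:char-density} to a matroid construction.

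The construction I would obtain by generalizing the Evans--Hrushovski example \cite[Ex.~2]{EvansHrushovski}. There the failure of an algebraic representation in a characteristic $p$ is forced by a congruence on $p$: the circuits force any representation to involve a primitive root of unity, and in the characteristics where $\FF_p$ does not already contain enough roots of unity the Frobenius endomorphism $x\mapsto x^{p}$ supplies an extra algebraic relation that collapses the would-be representation. The generalization is to build, from a transcendental $t$, a primitive $N$-th root of unity $\zeta$, and Frobenius twists, a configuration whose algebraic matroid over $\overline{\FF}_p$ coincides with a fixed target matroid $M_{N,D}$ precisely when $\zeta$ already lies in the subfield $\FF_{p^{\mathrm{ord}_N(p)}}$ reachable by the allowed Frobenius data, so that representability holds exactly for $\mathrm{ord}_N(p)\in D$. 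Over characteristic $0$ and the residue characteristics not dividing $N$ for which the relevant roots of unity are available, the configuration is honestly linear, which simultaneously pins down $\chi_L(M_{N,D})$ and exhibits the inclusion $\chi_L(M_{N,D})\subseteq\chi_A(M_{N,D})$ that must hold in any case.

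I expect the main obstacle to be the converse: showing that $M_{N,D}$ has \emph{no} algebraic representation in the excluded characteristics. An algebraic representation may live in a field extension of arbitrarily large transcendence degree, so this demands an intrinsic obstruction rather than an inspection of the candidate. Here I would use the linearization machinery for algebraic matroids in positive characteristic underlying the ``Frobenius flock'' theme of this paper: differentiating a putative representation at a suitable place yields, place by place, a linear configuration over $\FF_p$ constrained both by matroid rank inequalities (Ingleton's inequality and the Dress--Lov\'asz inequality) and by the way Frobenius twists act on those linear data, and one argues that the simultaneous existence of all of them already forces $\mathrm{ord}_N(p)\in D$. Finally, should a single matroid $M_{N,D}$ not deliver an arbitrary divisor set $D$ directly, one can instead assemble the desired set from simpler building blocks — direct sums give intersections of characteristic sets, and a union construction (for instance via a matroid admitting two inequivalent families of representations, one usable whenever either of two submatroids is representable) gives unions — after which the number-theoretic bookkeeping of the first paragraph still applies and is routine.
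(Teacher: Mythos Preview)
Your outline points in the right direction but is a strategy sketch rather than a proof, and it contains one genuine gap. The central object --- a matroid $M_{N,D}$ with $\chi_A(M_{N,D})=S_{N,D}$ --- is described only by what it should do, not actually built; the non-representability argument is deferred to an unspecified combination of Frobenius-flock linearizations, Ingleton, and Dress--Lov\'asz that is too vague to evaluate. More seriously, your fallback plan leans on a ``union construction'': a matroid that is algebraically representable in characteristic~$p$ if and only if at least one of two given matroids is. No such operation is known, and producing one would itself be a substantial theorem; direct sums give intersections of characteristic sets, but nothing analogous is available for unions. Since you do not show that intersections of your building blocks already suffice, the argument as written has a hole exactly at the point where the building blocks are to be assembled.

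The paper avoids all of this by aiming much lower. It constructs, for each prime~$q$, a single matroid $M_q$ with $\chi_A(M_q)=\{p\in\PP : p\not\equiv 1\bmod q\}$ (Proposition~\ref{prop:inf-nocofinite}); this is essentially the Evans--Hrushovski example, and the non-representability is handled not by flock inequalities but by the full strength of Lemma~\ref{lem:evans-hur}(2), which reduces the question to solvability of an explicit system in the endomorphism division ring of a one-dimensional algebraic group. Direct sums over a finite set $S$ of primes then give, by the Chinese Remainder Theorem and Dirichlet, density $\prod_{q\in S}(q-2)/(q-1)$. Because $-\log\big((q-2)/(q-1)\big)\geq 1/q$ and $\sum_q 1/q$ diverges while each term tends to~$0$, an elementary subset-sum argument (Lemma~\ref{lem:density}) shows these products are dense in $[0,1]$. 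Only intersections are used, and the number theory never goes beyond Dirichlet, so both of your difficult steps are bypassed entirely.
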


\noindent Recall that the natural density of a set of primes is defined as:
\[
d(S) = \lim_{N \rightarrow\infty} \frac{\lvert \{p \in S \mid p < N\} \rvert}{\lvert \{p
\in \PP \mid p < N\} \rvert},
\]
if that limit exists.

While derivations of algebraic representations in positive characteristic do not
always give linear representations of the same matroid, Lindström found
cases where they did and used that to prove that for $p$ a prime, the so-called
Lazarson matroids $M_{p}$ have algebraic characteristic set consisting of
just~$p$~\cite{Lindstrom1985}. Gordon extended this technique to give examples
of matroids with some special non-singleton finite algebraic characteristic
sets~\cite{Gordon}. He even went so far as to speculate that matroids with
non-empty finite linear characteristic set had finite algebraic characteristic
set, which is false by Theorem~\ref{thm:char-sets}.

Inspired by Lindström's work, Bollen, Draisma, and Pendavingh constructed what
they called a Frobenius flock from an algebraic realization of a matroid. The
Frobenius flock of an algebraic realization is a collection of compatible linear
realizations of different matroids, which collectively agree with the algebraic
matroid~\cite{BollenDraismaPendavingh}. The examples of Lindström and Gordon
corresponded to the case where the flock was determined by a single one of these
linear realizations. Given the tight connection between the Frobenius flock
realizations and algebraic realizations in these cases, it is natural to wonder
how different they are in general. We can define the flock characteristic set
$\chi_{F}(M) \subset \mathbb{P}$, analogously to the linear and algebraic
characteristic sets, and the flock characteristic set is an upper bound on the
algebraic characteristic set. However, we show that their difference can be an
arbitrary finite set of
primes:
\begin{thm}\label{thm:char-sets-flock}
In Theorem~\ref{thm:char-sets}, the matroids constructed with infinite
algebraic characteristic set also have $\chi_F(M) = \mathbb P$.
\end{thm}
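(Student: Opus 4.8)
Recall the general inclusions $\chi_A(M)\cap\PP\subseteq\chi_F(M)\subseteq\PP$: the right-hand one is built into the definition of the flock characteristic set, and the left-hand one is the extraction of a Frobenius flock from an algebraic realization~\cite{BollenDraismaPendavingh}. Since the matroids produced in Theorem~\ref{thm:char-sets} with infinite algebraic characteristic set have $\chi_A(M)$ cofinite, $\chi_F(M)$ is automatically cofinite; the entire content of the statement is to produce, for each of the finitely many primes $p\in\PP\setminus\chi_A(M)$, a Frobenius flock realization of $M$ in characteristic~$p$.

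The plan is to go back into the construction of $M$ in the proof of Theorem~\ref{thm:char-sets} and treat its two sub-cases. If $0\in C_L$ (so that $C_L$, and a fortiori $C_A$, is cofinite), then $M$ carries a linear realization in characteristic~$0$, and the construction — given later in the paper — of Frobenius flock realizations in every positive characteristic out of a characteristic-$0$ linear realization yields $\chi_F(M)=\PP$ immediately. In the remaining case $C_L$ is finite, $0\notin C_A$, and $\chi_A(M)=\PP\setminus S$ for a finite set $S$ of primes; here $M$ is assembled, by an operation such as a generalized parallel connection or $2$-sum along common restrictions, from a matroid $N$ carrying the linear constraint — with $\chi_L(N)=C_L$ finite but $\chi_A(N)\supseteq\PP$ — together with one gadget $G_p$ for each $p\in S$ whose only role is to destroy algebraic representability in characteristic~$p$ while leaving every other characteristic untouched. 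The first step is the bookkeeping lemma that this combining operation intersects flock characteristic sets, $\chi_F(M)=\chi_F(N)\cap\bigcap_{p\in S}\chi_F(G_p)$, proved by gluing the flocks of the pieces along the common restriction exactly as one does for linear and for algebraic representations.

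It then suffices to prove $\chi_F(N)=\PP$ and $\chi_F(G_p)=\PP$ for each $p\in S$. For $N$ this is free, since $\PP\subseteq\chi_A(N)$ forces $\PP=\chi_A(N)\cap\PP\subseteq\chi_F(N)$ by the inclusion above; and each $G_p$ has a flock in every characteristic $q\neq p$ because it is algebraic there. Thus the whole weight of the theorem falls on the single assertion that $p\in\chi_F(G_p)$, which must hold \emph{despite} $p\notin\chi_A(G_p)$. To establish it one writes out, in characteristic~$p$, the data required by the Bollen--Draisma--Pendavingh axioms: one must check that the finitely many matroids occurring in the flock of $G_p$ are linearly representable over $\overline{\FF}_p$, and that their representations can be chosen over $\overline{\FF_p((t))}$ so as to respect the Frobenius endomorphism. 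The point — and the main obstacle — is that the obstruction forbidding an \emph{algebraic} realization of $G_p$ in characteristic~$p$ (a root-of-unity congruence, an inseparability phenomenon, or whatever it may be in the construction) dissolves under this valuated, flock-level relaxation; carrying out the verification should reduce to a finite, if delicate, computation in linear algebra over valued fields. Feeding the resulting flocks back through the combining operation then provides, for every $p\in\PP\setminus\chi_A(M)$, the Frobenius flock realization of $M$ in characteristic~$p$ that was needed.
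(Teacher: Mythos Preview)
Your first case ($0\in C_L$) is exactly right and matches the paper.

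In the second case you have guessed wrong about the structure of the construction, and this leads you to pose a hard problem that the paper never has to solve. The matroid $M$ in Theorem~\ref{thm:char-sets} is built as a \emph{direct sum} $M_1\oplus M_2$, not a $2$-sum or generalized parallel connection. When $C_L$ is finite and $C_A$ is cofinite, the two summands are: $M_2$ from Proposition~\ref{prop:finite-all} with $\chi_L(M_2)=C_L$ and $\chi_A(M_2)=\PP$ (your $N$), and a single $M_1$ from Proposition~\ref{prop:cofinite-cofinite} with $\chi_L(M_1)=\chi_A(M_1)=C_A\cup\{0\}$, which simultaneously kills algebraic representability at all the excluded primes. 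The crucial feature you missed is that this $M_1$ has $0\in\chi_L(M_1)$. So Theorem~\ref{thm:frob-flock-all} applies to $M_1$ just as it did in your first case, giving $\chi_F(M_1)=\PP$; and $\chi_F(M_2)=\PP$ is automatic from $\chi_A(M_2)=\PP$. The direct sum then intersects flock characteristic sets (citing \cite{Bollen}), and you are done.

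Thus the ``main obstacle'' you identify --- producing, by a delicate valued-field computation, a Frobenius flock of a gadget $G_p$ in the very characteristic $p$ where it is not algebraic --- never arises. Everything reduces to Theorem~\ref{thm:frob-flock-all}; you do not need to touch the flock axioms directly. Your sketch as written leaves that obstacle unresolved (``the obstruction\ldots dissolves''; ``should reduce to a finite, if delicate, computation''), so it is not a proof, and filling it in along your proposed lines would be substantially harder than the paper's route.
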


It would be interesting to know if the flock characteristic set can be an
arbitrary cofinite set, like the linear and algebraic characteristic sets can
be. However, the combinations of flock characteristic set and linear
characteristic set are
constrained by the following:
\begin{thm}
\label{thm:frob-flock-all}Let $M$ be a matroid. If $0\in\chi_{L}(M)$,
then $\chi_{F}(M)=\mathbb{P}$. 
\end{thm}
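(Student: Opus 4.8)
The plan is, starting from a characteristic-$0$ linear representation of $M$, to produce for every prime $p$ a Frobenius flock over $\FF_p$ whose underlying matroid is $M$; this yields $p\in\chi_F(M)$ for all $p$, hence $\chi_F(M)=\PP$.

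First I would reduce to a rational representation. Since $0\in\chi_L(M)$ the set $\chi_L(M)$ is cofinite, and a standard argument then shows that $M$ is representable over $\QQ$; clearing denominators, fix $A\in\ZZ^{r\times E}$ representing $M$ over $\QQ$, where $r=\operatorname{rank}(M)$. Now fix a prime $p$, view $A$ over $\QQ_p\supseteq\QQ$, and let $W\subseteq\QQ_p^E$ be its (necessarily $r$-dimensional) row space. For $\alpha\in\ZZ^E$ set $L_\alpha=\operatorname{diag}(p^{\alpha})W\cap\ZZ_p^E$ and let $V_\alpha\subseteq\FF_p^E$ be the reduction of $L_\alpha$ modulo $p$. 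Each $L_\alpha$ is a free $\ZZ_p$-module of rank $r$ (a saturated rank-$r$ sublattice of $W$), so $\dim V_\alpha=r$, and $(V_\alpha)_{\alpha\in\ZZ^E}$ is the linear matroid flock attached to the valuated matroid $B\mapsto v_p(\det A_B)$; in particular it satisfies the flock axioms of \cite{BollenDraismaPendavingh}. Because $\operatorname{diag}(p^{\mathbf 1})=pI$ and $pW=W$, we have $L_{\alpha+\mathbf 1}=L_\alpha$ and hence $V_{\alpha+\mathbf 1}=V_\alpha$; and since the Frobenius endomorphism of $\FF_p$ is the identity, the Frobenius twist is trivial on subspaces of $\FF_p^E$, so the Frobenius axiom — which relates $V_{\alpha+\mathbf 1}$ to the Frobenius twist of $V_\alpha$ — holds automatically. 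Thus $(V_\alpha)$ is a Frobenius flock over $\FF_p$.

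Next I would identify its underlying matroid, whose bases are by definition the $B\subseteq E$ that occur as a basis of $M(V_\alpha)$ for some $\alpha$. For the inclusion "$\subseteq$": if $B$ is a basis of $M(V_\alpha)$ then the projection $L_\alpha\to\ZZ_p^B$ onto the $B$-coordinates is surjective modulo $p$, hence surjective by Nakayama, hence $W\to\QQ_p^B$ is an isomorphism, so $\det A_B\neq0$ in $\QQ_p$ and therefore in $\QQ$; thus $B\in\mathcal B(M)$. For "$\supseteq$": given $B\in\mathcal B(M)$, the coordinates of elements of $W$ outside $B$ are a fixed $\QQ$-linear function of their coordinates in $B$, with bounded $p$-adic denominators, so taking $\alpha_e$ sufficiently large for $e\notin B$ and $\alpha_e=0$ for $e\in B$ makes $L_\alpha\to\ZZ_p^B$ surjective, i.e.\ $B$ is a basis of $M(V_\alpha)$. (Equivalently, any $\alpha$ for which $B$ minimizes $B'\mapsto v_p(\det A_{B'})-\sum_{e\in B'}\alpha_e$ works, and such $\alpha$ exists because this minimum is finite exactly on $\mathcal B(M)$.) Hence the flock realizes $M$, so $p\in\chi_F(M)$; as $p$ was arbitrary, $\chi_F(M)=\PP$.

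The one genuinely delicate step is the reduction to a representation over $\QQ$ (equivalently $\ZZ$): it is precisely this that provides, for every prime $p$, an embedding $\QQ\hookrightarrow\QQ_p$ whose residue field is exactly $\FF_p$, and that is what makes the Frobenius axiom cost nothing. Everything else — freeness and rank of the $L_\alpha$, matching $(V_\alpha)$ against the precise linear-flock and Frobenius-flock axioms of \cite{BollenDraismaPendavingh} and against its definition of the underlying matroid, and the Nakayama-type computations of the previous paragraph — is routine bookkeeping.
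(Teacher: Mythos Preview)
Your overall strategy---pass to a local field, build the lattice flock $L_\alpha=\operatorname{diag}(p^{\alpha})W\cap\mathcal{O}^E$, reduce modulo the maximal ideal, and identify the support matroid---is exactly right, and matches the paper's approach. But the ``one genuinely delicate step'' you flag is in fact a genuine gap: it is \emph{not} true that $0\in\chi_L(M)$ implies $M$ is representable over $\QQ$. Ingleton's result (which the paper cites) gives only that $M$ is representable over some number field $K$, and in general one cannot do better; there are matroids realizable over $\mathbb{R}$ or over $\QQ(\sqrt{5})$ but not over $\QQ$ (the Perles configuration is a classical example, and Mn\"ev universality shows the phenomenon is ubiquitous). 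So the reduction to $\QQ$, on which your whole argument rests, fails.

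Over a number field $K$ the construction still runs: pick a prime $\mathfrak{P}$ of $\mathcal{O}_K$ over $p$, use the $\mathfrak{P}$-adic valuation, and reduce. The problem is that the residue field is now $\FF_{p^m}$ with $m\geq 1$ the residue degree, so you obtain a linear flock over $\FF_{p^m}$ with \emph{trivial} automorphism $\phi=\mathrm{id}$. When $m>1$ the inverse Frobenius $F^{-1}$ on $\FF_{p^m}$ is nontrivial, so this is not yet a Frobenius flock, and your observation that ``the Frobenius axiom costs nothing'' breaks down. The paper closes this gap with a separate ``stretching'' lemma (Lemma~\ref{lem:123}): given a $\phi$-linear flock and an automorphism $\psi$ with $\psi^m=\phi$, one can subdivide the lattice $\ZZ^E$ by a factor of $m$ and interpolate to obtain a $\psi$-linear flock with the same support matroid. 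Applying this with $\phi=\mathrm{id}$ and $\psi=F^{-1}$ (so $\psi^m=\mathrm{id}$ on $\FF_{p^m}$) converts the trivial-automorphism flock into a Frobenius flock. This stretching construction is the missing idea; without it your argument proves the theorem only for matroids that happen to be $\QQ$-representable.
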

\noindent Theorem \ref{thm:frob-flock-all}, together with results quoted above, show that
Theorem~\ref{thm:char-sets} constructs all possible triples of linear,
algebraic, and flock characteristic sets, in the case where the linear
characteristic set includes~$0$.

The method for proving Theorem~\ref{thm:frob-flock-all} involves ``stretching''
linear flocks (which are Frobenius flocks, but with a possible different
automorphism than Frobenius). A consequence of this construction, is the
following, which
disproves \cite[Conj.~8.21]{Bollen}:
\begin{thm}\label{thm:frob-dual}
If $M$ is a matroid, and $M^*$ is its dual matroid, then $\chi_F(M^*) =
\chi_F(M)$.
\end{thm}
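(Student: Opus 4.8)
The plan is to establish $\chi_F(M^*) = \chi_F(M)$ by showing that a Frobenius flock realization of $M$ over a field of characteristic $p$ can be transformed into one of $M^*$ over the same field, and vice versa. Since duality is an involution, it suffices to prove one direction: if $p \in \chi_F(M)$, then $p \in \chi_F(M^*)$. The key structural fact to exploit is that a Frobenius flock is essentially a collection of linear subspaces (the linear realizations of the various matroids appearing in the flock) indexed by the vertices of a certain polytope subdivision, together with compatibility conditions relating a subspace at one vertex to the Frobenius-twisted subspace at an adjacent vertex. Linear duality—sending a subspace $V \subseteq K^E$ to its annihilator $V^\perp \subseteq (K^E)^*$—is the natural candidate operation, and the first step is to check that orthogonal complement commutes appropriately with the Frobenius twist and with the combinatorial shift operation that moves between adjacent cells of the flock.

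Concretely, I would recall the definition of a linear flock and a Frobenius flock as used in the paper's proof of Theorem~\ref{thm:frob-flock-all}: a flock assigns to each integer vector $\alpha \in \ZZ^E$ (or each class modulo the all-ones vector) a subspace $V_\alpha$, such that $V_\alpha$ and $V_{\alpha + e_i}$ are related by scaling the $i$-th coordinate, and the Frobenius condition ties $V_\alpha$ to $\phi(V_{\alpha'})$ where $\phi$ is the coordinatewise Frobenius $x \mapsto x^p$ and $\alpha'$ is a Frobenius-rescaled index. The matroid underlying $V_\alpha$ is $M$ for generic $\alpha$. Now define the dual flock by $W_\alpha := (V_{-\alpha})^\perp$ (the sign flip is the natural thing, since scaling a coordinate up in $V$ corresponds to scaling it down in $V^\perp$). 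I would then verify: (i) $W_\alpha$ and $W_{\alpha+e_i}$ satisfy the flock compatibility, using that $(\,\cdot\,)^\perp$ turns coordinate scaling by $t$ into coordinate scaling by $t^{-1}$; (ii) the Frobenius compatibility is preserved because $\phi(V)^\perp = \phi(V^\perp)$ for the coordinatewise power map (this uses that $\phi$ is a field homomorphism, so it respects the bilinear pairing up to applying $\phi$ to scalars); and (iii) the matroid underlying $W_\alpha$ at a generic index is the dual of the matroid underlying $V_\alpha$, which is the standard fact that the orthogonal complement of a representation of $N$ represents $N^*$. Putting these together, $W$ is a Frobenius flock realizing $M^*$ over the same field, so $p \in \chi_F(M^*)$.

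The main obstacle I anticipate is bookkeeping the index conventions so that the combinatorial shift operation and its interaction with duality are matched correctly—in particular, making sure the "generic cell" of the dual flock really is the one whose matroid is $M^*$ rather than the dual of some minor of $M$, and that the sign flip $\alpha \mapsto -\alpha$ lands cells on cells rather than shifting into a different subdivision. This requires being careful about how the valuated-matroid or tropical-linear-space structure underlying the flock dualizes: the dual of a valuated matroid has its valuation negated on complementary bases, which is exactly why the $-\alpha$ appears, but aligning this with the paper's chosen parametrization of the flock is the delicate point. A secondary, more routine check is that $W$ is a genuine Frobenius flock and not merely a linear flock (i.e., that the automorphism really is Frobenius, not some other field automorphism); this should follow immediately since we never change the field or the automorphism, only apply $(\,\cdot\,)^\perp$ which is defined over the prime field. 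Once the construction is set up correctly, all verifications reduce to elementary linear algebra over the field together with the functoriality of $\perp$ under coordinate rescaling and under field homomorphisms.
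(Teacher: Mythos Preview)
Your construction $W_\alpha = (V_{-\alpha})^\perp$ is exactly the right dual object, and the checks (i) and (iii) go through as you expect. The gap is in (ii): the automorphism is \emph{not} preserved, it is inverted. If $V_\bullet$ is a Frobenius flock, so $V_{\beta+\mathbf{1}} = F^{-1}V_\beta$, then $V_{-\alpha-\mathbf{1}} = F\,V_{-\alpha}$, and since $(\phi V)^\perp = \phi(V^\perp)$ you get
\[
W_{\alpha+\mathbf{1}} = (V_{-\alpha-\mathbf{1}})^\perp = (F\,V_{-\alpha})^\perp = F\,(V_{-\alpha}^\perp) = F\,W_\alpha.
\]
So $W_\bullet$ is an $F$-linear flock, not an $F^{-1}$-linear flock. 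Your ``secondary, more routine check'' is precisely the step that fails, and this is the whole reason the theorem was open (it disproves \cite[Conj.~8.21]{Bollen}).

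The paper fixes this with an extra idea you are missing. First, it reduces to the case where $W_\bullet$ is defined over a \emph{finite} extension of $\FF_p$ (using that a flock is determined by finitely many algebraic data, its skeleton). Over such a field $F$ has finite order, so $F = (F^{-1})^m$ for some $m$. Then the paper applies its stretching Lemma~\ref{lem:123}: given a $\phi$-linear flock and an $m$th root $\psi$ of $\phi$, one can interpolate to a $\psi$-linear flock with the same support matroid. Taking $\phi = F$ and $\psi = F^{-1}$ turns the $F$-linear flock $W_\bullet$ into a genuine Frobenius flock for $M^*$. Without the finite-field reduction and the stretching lemma, the argument does not close.
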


While we don't know about cofinite flock characteristic sets, any single prime
may be a flock characteristic set~\cite{Lindstrom1985,BollenDraismaPendavingh}.
Moreover, we show that certain finite sets are also possible:
\begin{thm}
\label{thm:finite-flock-char}Let $C$ be any Gordon-Brylawski set of primes. Then
there exists a matroid~$M$ with $\chi_L(M) = \chi_A(M) = \chi_{F}(M)=C$.
\end{thm}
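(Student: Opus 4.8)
The plan is to take $M$ to be the same matroid that exhibits $C$ as a Gordon--Brylawski set and to upgrade the analysis of its linear and algebraic realizations, due to Gordon and Brylawski, to an analysis of its Frobenius flock realizations. Throughout I use that a linear realization gives rise to an algebraic realization, which in turn gives rise to a Frobenius flock realization, so that $\chi_{L}(M)\subseteq\chi_{A}(M)\subseteq\chi_{F}(M)$ for every matroid $M$. Hence it suffices to produce a matroid with $C\subseteq\chi_{L}(M)$ and $\chi_{F}(M)\subseteq C$: the chain of inclusions then forces $\chi_{L}(M)=\chi_{A}(M)=\chi_{F}(M)=C$. The lower bound is the easy half. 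A Gordon--Brylawski set is, by definition, realized by a matroid $M$ that Gordon assembles---using Brylawski's modular constructions---from finitely many constituent matroids $M_{1},\dots,M_{k}$ sharing a common, uniquely representable flat $F$, in such a way that a representation of $M$ over a field $K$ encodes and is encoded by a representation of some $M_{i}$ over $K$; the base constituents are Lazarson matroids $M_{p}$ (or related geometries with linear characteristic set $\{p\}$) for $p\in C$, and consequently $\chi_{L}(M)=\bigcup_{i}\chi_{L}(M_{i})=C$. All the work is in the upper bound $\chi_{F}(M)\subseteq C$.

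I would prove $\chi_{F}(M)\subseteq C$ by induction on Gordon's recursive construction, using two ingredients. The base case is $C=\{p\}$ and $M=M_{p}$: here $\chi_{A}(M_{p})=\{p\}$ is Lindstr\"om's theorem, and the sharper $\chi_{F}(M_{p})=\{p\}$ records the observation of Bollen--Draisma--Pendavingh that the Frobenius flock of $M_{p}$ is rigid---pinned down by a single linear realization, which by the Lindstr\"om valuation computation can only exist in characteristic $p$---and may be quotable directly. The inductive step is a restriction lemma for flocks across Gordon's gluing: if $M$ is built from $M_{1},\dots,M_{k}$ along the modular flat $F$, and $M$ admits a Frobenius flock realization in characteristic $\ell$, then passing to the minor on the ground set of the appropriate $M_{i}$ yields a Frobenius flock realization of that $M_{i}$ in characteristic $\ell$, so that $\ell\in\chi_{F}(M_{i})$, which equals the corresponding $C_{i}\subseteq C$ by induction. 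The crux is that the ``branching'' Gordon and Brylawski exploit for linear realizations---every realization of $M$ restricts to a genuine realization of one of the pieces, because $M|F$ is uniquely representable and the gluing forces a choice---continues to hold at the level of flocks; this should follow from the compatibility axioms defining a Frobenius flock together with the unique representability of $M|F$, which already rigidifies the part of each local linear realization that meets $F$.

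Combining the two bounds gives $C=\chi_{L}(M)\subseteq\chi_{A}(M)\subseteq\chi_{F}(M)\subseteq C$, as desired. I expect the main obstacle to be the inductive step: a Frobenius flock is a whole coherent family of linear realizations of minors rather than a single matrix, so one must rule out the possibility that such a family ``interpolates'' between the branches of the gluing---stitching together local data that individually live in no single constituent---and thereby realizes $M$ in a characteristic outside $C$. Handling this requires tracking the individual linear realizations in the flock and the action of the Frobenius twist across $F$, rather than reasoning purely combinatorially. A secondary difficulty is the base case, where one must extract from the work of Lindstr\"om and Bollen--Draisma--Pendavingh precisely the statement that $M_{p}$ has no Frobenius flock realization in any characteristic $\ell\neq p$, not merely that it has no algebraic representation there.
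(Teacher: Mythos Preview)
Your high-level strategy is fine: establish $C\subseteq\chi_L(M)$ and $\chi_F(M)\subseteq C$ and let the chain $\chi_L\subseteq\chi_A\subseteq\chi_F$ do the rest. But the substance of your proposal rests on a misreading of the construction. In this paper, a Gordon--Brylawski set is \emph{not} obtained by a modular gluing of Lazarson matroids along a common flat; it is a set $\{p_1,\ldots,p_k\}$ of primes satisfying an explicit arithmetic condition on the sequence $b_i=\lfloor n/2^{s-i+1}\rfloor$ with $n=p_1\cdots p_k+1$ (Definition~\ref{def:bry}). The associated matroid $M_n$ is a single rank-$3$ matroid on $2s+6$ points, represented over $\mathbb F_{p_1}$ by the Brylawski matrix $N_n$. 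There is no recursive structure to induct on, and Lazarson matroids do not appear.

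The paper's argument for $\chi_F(M_n)\subseteq C$ (Proposition~\ref{prop:bry}) is direct. Given a Frobenius flock realization of $M_n$ in characteristic~$p$, the restriction of $M_n$ to its first four elements is $U_{3,4}$, which is rigid in the sense of~\cite{BollenDraismaPendavingh}. Rigidity produces an index $\alpha$ for which those four elements already form a circuit in $M(V_\alpha)$. Chasing the remaining circuits of $M_n$ then forces, after row operations and column scaling, $V_\alpha$ to be the row space of $N_n$ itself over~$\mathbb F_p$. A specific $3\times 3$ minor of $N_n$ equals $p_1\cdots p_k$ and corresponds to a circuit of $M_n$, so $p\mid p_1\cdots p_k$. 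The lower bound $\chi_L(M_n)\supseteq C$ is Gordon's theorem.

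Your inductive scheme, even on its own terms, is only a plan: you rightly flag that a Frobenius flock is a coherent family rather than a single matrix and offer no mechanism to prevent it from ``interpolating'' across the branches of a gluing. But the more basic issue is that this is neither the matroid nor the definition of Gordon--Brylawski set that the theorem refers to.
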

\noindent Gordon-Brylawski sets are sets of primes satisfying a certain
technical condition, given in Definition~\ref{def:bry} below. Although we don't
know if the cardinality of a Gordon-Brylawski set is bounded,
Example~\ref{ex:gb-80} shows that a Gordon-Brylawski set may have as many as 80
elements.

Recently, matroids over hyperfields and tracts have attracted much attention, as
a generalization of both matroids and matroid realizations over a
field~\cite{BakerBowler,Su}. In general, hyperfields and tracts don't seem to
have a natural notion of characteristic, and so it's not clear what would be the
analogous definition of a characteristic set. Nonetheless, Frobenius flock
realizations are equivalent to realizations over a certain skew hyperfield
introduced in~\cite{Pendavingh}, and so our results do cover characteristic sets
for this specific class of hyperfields. Further investigation of the
characteristic sets of Frobenius flocks could give some insight into what kind
of behavior we should expect for realizations over tracts and skew hyperfields
in general.

The remainder of this paper is organized as follows. In
Section~\ref{sec:Specified-characteristic-sets}, we construct matroids to
prove Theorem~\ref{thm:char-sets}. In
Section~\ref{sec:Infinite-algebraic-char}, we construct matroids whose algebraic
characteristic set is neither finite nor cofinite, and prove
Theorem~\ref{thm:char-density}. In Section
\ref{sec:Constructing-Frobenius-flocks}, we recall the definition of linear
flocks from \cite{BollenDraismaPendavingh} and prove the Theorem
\ref{thm:frob-flock-all}.  Finally in
Section~\ref{sec:Brylawski-matroids}, we examine examples of matroids with
finite flock characteristic sets and prove Theorem~\ref{thm:finite-flock-char}.

\section{Specified characteristic sets} \label{sec:Specified-characteristic-sets}
In this section, we introduce a lemma of Evans and Hrushovski and use it to
construct matroids with specified linear and algebraic characteristic sets.
Evans and Hrushovski constructed algebraic realizations of matroids using
matrices of endomorphisms of a fixed one-dimensional group. Moreover, they
showed that for certain matroids, all algebraic realizations are equivalent to
realizations by such matrices.

This one-dimensional group construction simultaneously generalizes the
realization of linear matroids as algebraic matroids and the realization of
rational matroids as algebraic matroids over any field using monomials. The
important point for us is that it depends on a choice of one-dimensional
connected algebraic group $G$ over an algebraically closed field~$K$. Such
groups can be classified as either $G_a$, the additive group of $K$, $G_m$, the
multiplicative group of $K$, or $E$ an elliptic curve
over~$K$~\cite[Sec.~2.1]{BollenCartwrightDraisma}. In each of these
cases the ring
of endomorphisms of the algebraic group is an integral domain $\mathbb E$, which
can be shown to be contained in a (possibly non-commutative) division ring~$D$.
The one-dimensional group construction turns a linear representation of a
matroid over $D$ into an algebraic representation over~$K$. The standard
translation of linear matroids into algebraic matroids corresponds to the
group~$G_a$, with $k \in K$ corresponding to the function $x \mapsto kx$, which
is an endomorphism of $G_a$. Likewise, the endomorphisms of the multiplicative
group $G_m$ are just the integers with $n$ corresponding to the multiplicative
endomorphism $x \mapsto x^n$, and then the group construction translates an
integer matrix into monomials.

\begin{lem}[Lem.~3.4.1 in \cite{EvansHrushovski}] \label{lem:evans-hur} Let $\Phi$ be a collection of equations in
the variables $x_{0},...,x_{n}$ including the equations and inequalities: 
\[
x_{0}=0,\;x_{1}=1, \mbox{ and } x_{i}\neq x_{j}\,(\mbox{for all }\,i\neq j),
\]
together with some equations of the form: 
\[
x_{i}=x_{j}+x_{k}\,(\mbox{where }\,j,k\neq0,k\neq i\neq j),x_{i}=x_{j}\cdot x_{k}\,(\mbox{where }\,i,j,k\neq0,1\,and\,k\neq i\neq j).
\]

Then there exists a matroid $M$ satisfying the following properties:
\begin{enumerate}
    \item $M$ has a linear realization over an infinite field~$K$ if and only if there exist (distinct) values for $x_{0},\ldots,x_{n}$ in~$K$ which simultaneously satisfy every equation in~$\Phi$.
    \item  $M$ has an algebraic realization over a field $K$ if and only if there exists a linear representation of $M$ over the division ring generated by the ring of endomorphisms of a 1-dimensional algebraic group $G$ over a field of the same characteristic as~$K$.
\end{enumerate}

\end{lem}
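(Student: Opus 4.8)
The plan is to realize $M$ by the classical von Staudt method of encoding field arithmetic into a rank-$3$ matroid, and then to analyze its linear and its algebraic representations separately. Take a ground set consisting of three frame points spanning the plane, points $e_{0},e_{1},\dots,e_{n}$ constrained to lie on a common line $\ell$, and, for each relation in $\Phi$, the auxiliary points of the standard von Staudt \emph{addition} or \emph{multiplication} gadget. The frame and the incidences of $e_{0},e_{1}$ are arranged so that in any rank-$3$ linear representation $\ell$ becomes a projective line with a coordinate in which $e_{0}\mapsto 0$ and $e_{1}\mapsto 1$; the positions of $e_{2},\dots,e_{n}$ then record a tuple of field elements. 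Gluing in an addition gadget for each $x_{i}=x_{j}+x_{k}$ and a multiplication gadget for each $x_{i}=x_{j}\cdot x_{k}$ forces the corresponding arithmetic relation on those coordinates; the index restrictions ($j,k\neq 0$, $k\neq i\neq j$, resp.\ $i,j,k\neq 0,1$, $k\neq i\neq j$) are exactly the conditions under which the gadgets are non-degenerate and may be superimposed on $\ell$. The matroid $M$ is then defined by declaring the frame independent, each $e_{i}$ to lie on $\ell$, and each gadget to impose its collinearities, while the inequalities $x_{i}\neq x_{j}$ become $e_{i}\neq e_{j}$ on $\ell$.

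For claim (1), given a linear realization of $M$ over an infinite field $K$ of the correct rank, the gadget dependencies pin down the combinatorial type of the representation: the frame spans a plane, $\ell$ is a line carrying the distinct points $e_{0},\dots,e_{n}$, and the gadget incidences, via the usual von Staudt calculation, translate into $x_{i}=x_{j}+x_{k}$ and $x_{i}=x_{j}x_{k}$ for the coordinates; so $\Phi$ has a solution in $K$. Conversely, from a solution $0=x_{0},1=x_{1},x_{2},\dots,x_{n}\in K$ one places the $e_{i}$ at these coordinates on a line in $K^{3}$ and then inserts the auxiliary points, using infiniteness of $K$ so that at each step the new auxiliary point can avoid the finitely many lines and points already present. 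One also checks that $M$ has no lower-rank and no sporadic representations, again a consequence of the gadget design and of $K$ being infinite.

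For claim (2), the ``if'' direction is the one-dimensional group construction recalled above: a linear representation of $M$ over the division ring $D$ generated by $\operatorname{End}(G)$, for $G$ a one-dimensional connected group over an algebraically closed field of characteristic $\operatorname{char} K$, yields an algebraic representation of $M$ in that characteristic. For ``only if'' one shows that \emph{every} algebraic representation of this particular $M$ is equivalent to one of this form: the von Staudt rigidity forces an algebraic realization to restrict, on each rank-$3$ flat, to something linear over a local ring, and, since all the $e_{i}$ lie on the single line $\ell$ and the addition and multiplication gadgets glue these local pictures consistently, the only one-parameter coordinatizations of $\ell$ supporting both operations simultaneously are those arising from a one-dimensional algebraic group; the classification of such groups as $G_{a}$, $G_{m}$, or an elliptic curve is used here to enumerate the possibilities, and their endomorphism rings supply $D$.

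The hard step is the ``only if'' half of claim (2): excluding the possibility that some algebraic representation of $M$ encodes, through its algebraic-dependence structure, arithmetic strictly more general than $\operatorname{End}(G)$-linear algebra. This is the substantive content of Evans and Hrushovski's analysis of algebraic matroids via one-dimensional groups; by contrast, the linear side and the ``if'' half reduce to standard von Staudt and group-construction bookkeeping.
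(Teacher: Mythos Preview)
The paper does not prove this lemma; it is quoted from Evans and Hrushovski \cite{EvansHrushovski} and used as a black box. There is therefore no ``paper's own proof'' to compare against. I can, however, comment on your sketch.

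Your outline for claim~(1) is essentially correct and is the standard von Staudt construction: encode each variable as a point on a fixed line $\ell$ in a rank-$3$ matroid, with addition and multiplication gadgets enforcing the relations of~$\Phi$. The index restrictions in the lemma are, as you say, exactly what is needed to keep the gadgets non-degenerate, and infiniteness of~$K$ lets you place auxiliary points in general position. This part is fine as a sketch, though of course a full proof requires verifying that the resulting dependency structure really is a matroid and that no unintended dependencies are introduced.

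Your account of claim~(2), however, does not reflect the actual Evans--Hrushovski argument, and the mechanism you describe would not work. You write that ``the von Staudt rigidity forces an algebraic realization to restrict, on each rank-$3$ flat, to something linear over a local ring'' and that gluing these local pictures yields a one-dimensional group. This is not how the argument goes: there is no a priori reason an algebraic realization should be locally linear, and no ``local ring'' structure is available in the algebraic-matroid setting. The genuine proof is model-theoretic. One works in the theory of algebraically closed fields, observes that the addition and multiplication gadgets in~$M$ form instances of the \emph{group configuration}, and invokes Hrushovski's group configuration theorem to produce a type-definable one-dimensional group~$G$ acting on (a quotient of) the line~$\ell$. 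The classification of connected one-dimensional algebraic groups as $G_a$, $G_m$, or an elliptic curve then enters, and the coordinates of the $e_i$ become elements of the division ring generated by~$\operatorname{End}(G)$. You correctly flag this direction as the hard step and defer to Evans--Hrushovski, but the intermediate heuristic you offer is misleading and should be replaced by a pointer to the group configuration theorem.
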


From now on, we will refer to systems of equations satisfying the conditions of
Lemma~\ref{lem:evans-hur} to mean the form in the first paragraph. Note that, because of the required inequalities, a solution to such a system in a division ring $Q$ will always mean an assignment of distinct values of~$Q$ for the variables.

We now recall the classification of the endomorphism rings of a one-dimensional
algebraic group. If characteristic of $K$ is $0$, then the endomorphism ring of
$G_a$, $G_m$, or an elliptic curve is, respectively, $K$, $\mathbb{Z}$, and
either $\mathbb{Z}$ or an order in an imaginary quadratic number
field~\cite[Thm.~VI.6.1(b)]{Silverman}. If
characteristic of $K$ is $p>0$, then the endomorphism ring of of $G_{m}$ is
again $\mathbb{Z}$, but the endomorphisms of $G_{a}$ are instead isomorphic to
the non-commutative ring of $p$-polynomials, denoted
$K[F]$~\cite[Sec.~20.3]{Humphreys}. Elements of $K[F]$
are written as polynomials in an indeterminate $F$, with coefficients in $K$,
but with the multiplication rule defined by $Fa = a^pF$, if $a \in K$. In
addition to the same possibilities as characteristic~$0$, the endomorphism ring
of an elliptic curve in positive characteristic may be an order in a quaternion
ring~\cite[Cor. III.9.4]{Silverman}. 
\begin{lem}
\label{lem:basic-system} Let $n>1$ be an integer. Then there exists
a system of equations $\Phi_{n}$, satisfying the conditions in Lemma~\ref{lem:evans-hur},
whose variables include $y_{i}$ for $1\leq i\leq n+1$, and $w$,
with the following properties: 
\begin{enumerate}
    \item For any solution in a division ring~$Q$ to the system of equations~$\Phi_{n}$, the variables satisfy $y_{i}=y_1^{i}$ for $2 \leq i \leq n+1$ and $w=y_1^{n+1}+ny_1^{n-1}+(n-1)y_1^{n-2}$ and the inequality $y_1^{n-1} + y_1^{n-2} \neq 0$.
    \item For any field~$K$, there exists a finite set $S \subset K$ of elements algebraic over the prime subfield of~$K$, such that for any $t \in K \setminus S$, there exists a solution to $\Phi_{n}$ with $y_{1}=t$. 
\end{enumerate}
\end{lem}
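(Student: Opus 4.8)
The plan is to realize $\Phi_n$ as the ``computation graph'' of the powers of $y_1$ together with $w$: introduce one variable for each intermediate quantity, one equation of an allowed type for each arithmetic step, and then append the mandatory equations $x_0=0$, $x_1=1$ and all the inequalities $x_i\neq x_j$. Throughout, one keeps $y_1$ away from $x_0,x_1$ so that the structural conditions on indices are met automatically.

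First I would pin down the powers. Since the rules forbid writing $y_1\cdot y_1$, force $y_2=y_1^2$ indirectly: with auxiliary variables satisfying $u=y_1+x_1$ and $v=u\cdot y_1$, the equation $v=y_2+y_1$ forces $y_2=(y_1+1)y_1-y_1=y_1^2$ using only allowed steps, and then $y_k=y_1\cdot y_{k-1}$ for $3\le k\le n+1$ pins down the rest. Next introduce $z$ with the single equation $z=y_{n-1}+y_{n-2}$ (reading $y_0$ as the variable $x_1$, i.e.\ the constant $1$, in the case $n=2$). Because $z$ is one of the $x_i$ and the inequalities of $\Phi_n$ include $z\neq x_0$, every solution automatically satisfies $y_1^{n-1}+y_1^{n-2}=z\neq 0$; so the inequality asserted in part~(1) is a free consequence of the standing distinctness requirements, not something one has to impose.

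The delicate part is building $w=y_1^{n+1}+ny_1^{n-1}+(n-1)y_1^{n-2}$, and this is where I expect the main obstacle. One natural way in: the coefficients $n$ and $n-1$ are exactly the two central coefficients, at $y_1^{n-1}$ and $y_1^{n-2}$, of $(1+y_1+\dots+y_1^{n-1})^2$, so $w$ differs from that square only by the two ``tails'' of its coefficient sequence and by $+y_1^{n+1}$. I would therefore build the staircase sums $1+y_1+\dots+y_1^k$ by adding the $y_i$ and $x_1$, form the required products and squares with multiplication gates, and combine everything — moving whole products onto whichever side of the defining equation for $w$ is convenient — so that the tails cancel against further such products and only single monomials $y_i$ remain as corrections. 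The essential point is that \emph{every} coefficient larger than $1$ must be produced by a multiplication gate and never by repeated addition: if one tried to obtain $n\,y_1^{n-1}$ by adding $y_1^{n-1}$ to itself $n$ times, then in characteristic $p\le n$ the $n$ partial sums could not all be distinct (two of them differ by $p\,y_1^{n-1}$, or the last one vanishes), the inequalities of $\Phi_n$ could not be satisfied, and part~(2) would fail for fields of small characteristic. Squaring a staircase must itself be done by the device used for $y_2$ above, but shifting by a \emph{high} power $y_1^M$ so that the correction terms are again distinct monomials. With a little bookkeeping one arranges that every intermediate value, regarded as a polynomial in $y_1$, is monic; in particular no two intermediate values, and no intermediate value together with $0$ or $1$, differ by a polynomial in $\mathbb Z[y_1]$ whose content is divisible by a prime.

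Finally, part~(1): in a division ring a solution satisfies every one of these circuit identities, since division rings are associative and distributive on both sides, so one reads off $y_i=y_1^i$, then the asserted value of $w$, and $z\neq 0$ gives the inequality. Part~(2): set $y_1=t$. Every other variable is then forced to a fixed value — the integer-coefficient polynomial in $t$ computed at the corresponding node — and all equations hold identically in $t$, so the assignment can fail to be a solution only if two variables receive the same value, or a value equals $0$ or $1$; that is, only if $t$ is a root of one of the finitely many nonzero polynomials $f_i-f_j$, $f_i$, $f_i-1$ of $\mathbb Z[t]$. By the monicity arrangement these stay nonzero modulo the characteristic of $K$, hence each has only finitely many roots in $K$, all algebraic over the prime field; taking $S$ to be their union gives the required finite set, and for $t\in K\setminus S$ the assignment is a solution of $\Phi_n$ with $y_1=t$.
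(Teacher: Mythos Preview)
Your overall framework is right: build $\Phi_n$ as a straight-line program computing the powers $y_1^i$ and then $w$, and for part~(2) show that any two intermediate values differ by a polynomial in $\mathbb Z[t]$ that is nonzero modulo every prime. Two points, however.

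First, a small misreading: the multiplicative clause in Lemma~\ref{lem:evans-hur} only requires $i\neq j$ and $i\neq k$ (and $i,j,k\neq 0,1$); it does \emph{not} forbid $j=k$. So $y_2=y_1\cdot y_1$ is legal, and your detour through $u=y_1+1$, $v=u\cdot y_1$ is unnecessary. The paper simply writes $y_{i}=y_{i-1}\cdot y_1$ for $2\le i\le n+1$.

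Second, and more seriously, your construction of $w$ is where the proof lives, and you have not actually given one. Extracting the two ``middle'' coefficients of $(1+y_1+\cdots+y_1^{n-1})^2$ means cancelling the tails $1+2t+\cdots+(n-2)t^{n-3}$ and $(n-1)t^n+\cdots+t^{2n-2}$; those tails themselves have large integer coefficients, and you have not explained how to produce them as products in a way that keeps all intermediate values distinct modulo every prime. The paper avoids this entirely with a much simpler Horner-style recursion: set $z_1=y_1+1$ and $z_k=z_{k-1}\cdot y_1$ (so $z_k=t^{k-1}(t+1)$), then let $w_1=y_3+y_1$ and alternate $w_{2k}=w_{2k-1}+z_k$, $w_{2k+1}=w_{2k}\cdot y_1$, ending with $w=w_{2n-3}+z_{n-1}$. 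One checks $w_{2k-1}=t^{k+2}+k\,t^{k}+(k-1)t^{k-1}$, whence $w=t^{n+1}+nt^{n-1}+(n-1)t^{n-2}$. The inequality in~(1) is then automatic because $z_{n-1}=t^{n-1}+t^{n-2}$ is one of the variables and must differ from $x_0=0$.

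For part~(2) the paper's distinctness check is exactly your idea, but made concrete by the shape of the construction: every intermediate value is a monic polynomial in $t$, and for the few pairs of equal degree the difference has two adjacent terms with \emph{consecutive} integer coefficients (e.g.\ $(i-1)t^{i-2}+(i-2)t^{i-3}$), hence is nonzero modulo every prime since no prime divides both $i-1$ and $i-2$. This ``consecutive coefficients'' device is what makes the verification uniform in the characteristic, and it falls out naturally from the add-$z_k$/multiply-by-$y_1$ recursion; it is not clear your staircase-squaring approach would yield anything so clean.
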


\begin{proof}
We define the system~$\Phi_{n}$ using variables denoted $x_{0},x_{1},y_{1},\ldots,y_{n+1}$,$z_{1},\ldots,z_{n-1}$,  
$w_{1},\ldots,w_{2n-3},w$, and satisfying the following equations:
\begin{align*}
 x_{0} & =0 & y_{2} & = y_{1}\cdot y_{1} & z_{1} & = y_{1}+x_{1} & w_{1} & = y_{3}+y_{1} \\
 x_{1} & =1 & y_{3} & = y_{2}\cdot y_{1} &  z_{2} & = z_{1}\cdot y_{1} & w_{2} & = w_{1}+z_{1} \\
  &  &  & \;\;\vdots  &  z_{3} & = z_{2}\cdot y_{1} &  w_{3} & = w_{2}\cdot y_{1} \\
  &  &  y_{n} & = y_{n-1}\cdot y_{1} &  &  \;\;\vdots &  w_{4} & = w_{3}+z_{2} \\
  &  &  y_{n+1} & = y_{n}\cdot y_{1} &  z_{n-1} & = z_{n-2}\cdot y_{1} & w_{5} & = w_{4}\cdot y_{1}\\
  % &  &  &  &  &  &  w_{6} & = w_{5}+z_{3} \\
  &  &  &  &  &  &  &\;\;\vdots \\
  &  &  &  &  &  &  w_{2n-3} & = w_{2n-2}\cdot y_{1} \\
  &  &  &  &  &  &  w & = w_{2n-3}+z_{n-1} 
\end{align*}

If we let $t$ denote the value of $y_{1}$, then we can recursively evaluate the
variables in terms of $t$:

\begin{align*}
x_{0} & =0 & y_{1} & = t & z_{1} & = t+1 & w_{1} & = t^{3}+t \\
x_{1} & =1 &  y_{2} & = t^{2} & z_{2} & = t\left(t+1\right) &  w_{2} & =
t^{3}+2t+1 \\ 
 &  & y_{3} & = t^{3} & z_{3} & = t^{2}\left(t+1\right) & w_{3} & = t^{4}+2t^{2}+t \\
 &  & \;\;\vdots &  &  & \;\;\vdots &  w_{4} & = t^{4}+3t^{2}+2t \\
 &  & y_{n} & = t^{n} &  z_{n-1} & = t^{n-2}\left(t+1\right) &  w_{5} & = t^{5}+3t^{3}+2t^{2}\\
 &  & y_{n+1} & = t^{n+1} &  &  & &  \;\;\vdots \\
 &  &  &  &  &  &  w_{2n-3} & = t^{n+1}+\left(n-1\right)t^{n-1} +(n-2)t^{n-2} \\
 &  &  &  &  &  &  w& = t^{n+1}+nt^{n-1}+\left(n-1\right)t^{n-2}\\
\end{align*}

This proves the first claim.

For the second claim, we need to show that there exists a finite set~$S$ in $K$,
such that any element~$t$ in $K \setminus S$
provides a solution to $\Phi_{n}$. To show
that, let us consider the above solution to $\Phi_{n}$ as polynomials in $t$
using the variable assignments as above and
let $P$ be the set of those polynomials. Let $S$ consist of all
roots of equations of the form $p-q=0$, for all distinct $p,q\in P$. Now, we
need to show that for all $p,q\in P$, $p-q$ are non-zero polynomials,
independent of the characteristic, to know that $S$ is finite. All the
polynomials in $P$ are monic, and the difference between monic polynomials of
different degrees is non-zero,
so it sufficient to show that $p-q \neq 0$ for polynomials~$p$
and~$q$ of the same degree.

The polynomials with degree $1$ are $y_{1}$ and $z_{1}$. The difference
between $y_{1}$ and $z_{1}$ is $1$, so they are distinct. Similarly,
degree $2$ elements are $y_{2}$ and $z_{2}$, their difference is
$t$ so they are distinct. For $3\leq i\leq n$, the elements with
degree $i$ are $t^{i},t^{i}+t^{i-1},t^{i}+\left(i-2\right)t^{i-2}+(i-3)t^{i-3}$
and $t^{i}+\left(i-1\right)t^{i-2}+(i-2)t^{i-3}$. The difference
between these terms are either a monic polynomial, $\left(i-2\right)t^{i-2}+(i-3)t^{i-3}$
or $\left(i-1\right)t^{i-2}+(i-2)t^{i-3}$. The terms $\left(i-2\right)t^{i-2}+(i-3)t^{i-3}$
and $\left(i-1\right)t^{i-2}+(i-2)t^{i-3}$ are not zero because a
prime cannot divide consecutive integers. So degree $i$ elements
are distinct for $3\leq i\leq n$. The elements with degree $n+1$
are $y_{n+1},w_{2n-3},$ and $w_{2n-2}$. The difference between these
terms are either a monic polynomial, $\left(n-1\right)t^{n-1}+\left(n-2\right)t^{n-2}$,
or $nt^{n-1}+\left(n-1\right)t^{n-2}$. These are not zero since because, again,
a prime cannot divide consecutive integers. Thus, $S$ is a finite set of elements, all algebraic over the
prime subfield of~$F$ and for any $t$ outside of~$S$,
each of the variables in the solution to $\Phi_{n}$ with $y_{1}=t$
will be distinct.
\end{proof}

We now use Lemma~\ref{lem:basic-system}, together with additional equations in
order to construct matroids with specified characteristic sets.

\begin{prop}
\label{prop:finite-finite} Let $C$ be a finite set of primes. Then
there exists a matroid $M$ such that $\chi_{L}(M)=\chi_{A}(M)=C$. 
\end{prop}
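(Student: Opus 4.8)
The plan is to feed Lemma~\ref{lem:evans-hur} a system $\Phi$ built from the system $\Phi_{n}$ of Lemma~\ref{lem:basic-system} by adjoining a single equation that pins down the characteristic. Assuming $C\neq\emptyset$, choose an integer $n\geq 3$ whose set of prime divisors is exactly $C$ (for instance $n=\prod_{p\in C}p$, or $n=4$ if $C=\{2\}$), and let $\Phi$ consist of the equations of $\Phi_{n}$ together with the equation $y_{n+1}=w+y_{n-2}$, which is of the form allowed in Lemma~\ref{lem:evans-hur} since $1\leq n-2$. Let $M$ be the matroid that Lemma~\ref{lem:evans-hur} associates to $\Phi$. (If $C=\emptyset$, one instead overlays two such systems for disjoint nonempty sets, say $\{2,3\}$ and $\{5,7\}$, on variable sets meeting only in $x_{0},x_{1}$ and including all cross-inequalities; a solution would then force the characteristic into an empty intersection, giving $\chi_{L}(M)=\chi_{A}(M)=\emptyset$. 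We concentrate on the case $C\neq\emptyset$.)

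Next I would analyse an arbitrary solution of $\Phi$ in a division ring $Q$. By Lemma~\ref{lem:basic-system}(1) it satisfies $y_{i}=y_{1}^{i}$, $w=y_{1}^{n+1}+ny_{1}^{n-1}+(n-1)y_{1}^{n-2}$, and $y_{1}^{n-1}+y_{1}^{n-2}\neq 0$, while the built-in inequalities give $y_{1}\neq 0$ (as $y_{1}\neq x_{0}=0$). Substituting into $y_{n+1}=w+y_{n-2}$ and cancelling $y_{1}^{n+1}$ yields $n\,(y_{1}^{n-1}+y_{1}^{n-2})=0$; since $y_{1}^{n-1}+y_{1}^{n-2}$ is a nonzero, hence invertible, element of $Q$, we get $n\cdot 1=0$, so $\mathrm{char}(Q)$ is a prime dividing $n$, i.e.\ an element of $C$. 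Conversely, if $\mathrm{char}(Q)\mid n$ then $ny_{1}^{n-1}+(n-1)y_{1}^{n-2}\equiv -y_{1}^{n-2}$, so the new equation is automatically satisfied by every solution of $\Phi_{n}$, and no extra coordinate or distinctness condition is introduced.

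From this the two characteristic sets fall out. By Lemma~\ref{lem:evans-hur}(1), $M$ is linearly representable over an infinite field $K$ iff $\Phi$ has a solution with distinct values in $K$; the first part of the previous paragraph forces $\mathrm{char}(K)\in C$, and for $p\in C$ the second part together with Lemma~\ref{lem:basic-system}(2) applied to the infinite field $\overline{\mathbb{F}}_{p}$ (pick $y_{1}=t$ outside the finite bad set) produces such a solution, so $\chi_{L}(M)=C$. By Lemma~\ref{lem:evans-hur}(2), $M$ has an algebraic realization in characteristic $\ell$ iff it is linearly representable over the division ring $D$ generated by the endomorphism ring of some one-dimensional algebraic group over a characteristic-$\ell$ field, which, by the same mechanism underlying part~(1), is equivalent to $\Phi$ having a solution with distinct values in $D$. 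Such a solution forces $\mathrm{char}(D)\mid n$, and running through the classification recalled before Lemma~\ref{lem:basic-system} shows that every such $D$ has characteristic $0$ (for $G_{m}$ one gets $\mathbb{Q}$; for $G_{a}$ in characteristic $0$ a characteristic-$0$ field; for elliptic curves $\mathbb{Q}$, an imaginary quadratic field, or a quaternion ring) except for the division ring generated by the ring $K[F]$ of $p$-polynomials, whose characteristic equals that of $K$. Hence $D$ must be of that last type with $p\mid n$, forcing $\ell=p\in C$; thus $\chi_{A}(M)\subseteq C$, and since $\chi_{A}(M)\supseteq\chi_{L}(M)=C$ we conclude $\chi_{L}(M)=\chi_{A}(M)=C$.

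The step I expect to require the most care is checking that the single equation $y_{n+1}=w+y_{n-2}$ does both jobs at once: in every division ring it must force $\mathrm{char}\in C$ via the cancellation that relies on the inequality $y_{1}^{n-1}+y_{1}^{n-2}\neq 0$ supplied by Lemma~\ref{lem:basic-system}, and in characteristic $p\in C$ it must remain consistent with a genuine solution of $\Phi_{n}$ in which all coordinates are distinct. A related point worth spelling out is that, in the endomorphism-ring classification, the multiplicative group $G_{m}$ contributes only $D=\mathbb{Q}$, so there is no route by which $M$ could become realizable in some characteristic through a characteristic-$0$ division ring; this is exactly what prevents $\chi_{A}(M)$ from swelling to all of $\mathbb{P}$, and here it is automatic, since a $\mathbb{Q}$-solution of $\Phi$ would force $n\cdot 1=0$ in $\mathbb{Q}$.
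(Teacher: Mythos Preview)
Your proof is correct and follows essentially the same approach as the paper: take $n$ with prime divisors exactly $C$, adjoin the single equation $y_{n+1}=w+y_{n-2}$ to $\Phi_{n}$, and use Lemma~\ref{lem:basic-system} to force $n\cdot 1=0$ in any division ring admitting a solution, then read off both characteristic sets via Lemma~\ref{lem:evans-hur}. You are slightly more careful than the paper about the edge cases $C=\{2\}$ (where $n=\prod_{p\in C}p=2$ would make $y_{n-2}$ undefined, so you take $n=4$) and $C=\emptyset$, but otherwise the arguments coincide.
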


\begin{proof}
Let $n$ be the product of the primes in $C$. We use the system $\Phi_{n}$
from Lemma~\ref{lem:basic-system} and add the equation $y_{n+1}=w+y_{n-2}$.
Now, use Lemma~\ref{lem:evans-hur} to construct a matroid~$M$.
If $\Phi_{n}$ has a solution in a division ring~$Q$, then by
Lemma~\ref{lem:basic-system}, the two sides of our added equation evaluate to
$y_{n+1} = y_1^{n+1}$ and $w+y_{n-2} = y_1^{n+1} + ny_1^{n-1} + ny^{n-2}$, with
$y_1^{n-1} + y_1^{n-2}$ non-zero in~$Q$. Therefore, for the equation to hold,
$n$ must be $0$, which means the characteristic of~$Q$ is contained in~$C$. In
other words, $\chi_L(M) \subset C$. Also,
since the endomorphism ring of a 1-dimensional group can only have
positive characteristic if the field of definition has the same characteristic,
then $\chi_{A}(M)\subset C$.

On the other hand, for any infinite field $K$ whose characteristic
is contained in $C$, we can choose $t\in K$ outside a finite set
and have a solution to $\Phi_{n}$ with $y_{i}=t^{i}$. Furthermore,
because $n=0$ in $K$, this will also be a solution with the additional
equation, showing that $\chi_{L}(M)\supset C$ and completing the
proof of the proposition. 
\end{proof}

\begin{lem}
\label{lem:cofinite-eqns} Let $C$ be the union of $\{0\}$ and a
cofinite set of primes. Then there exists a set of equations $\Phi_{C}$,
satisfying the set of constraints in Lemma~\ref{lem:evans-hur} such
that if $\Phi_{C}$ has a solution over a division ring~$Q$, then the
characteristic of~$Q$ is contained in~$C$. Conversely
if $K$ is any infinite field whose characteristic is contained in
$C$, then $\Phi_{C}$ has a solution in~$K$. 
\end{lem}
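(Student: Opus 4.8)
The plan is to assemble $\Phi_C$ out of one small block of equations for each prime $p$ in the finite set $\mathbb P\setminus C$, where the block for $p$ forces $p\cdot 1$ to be nonzero and is solvable over every infinite field of characteristic other than $p$. This is dual to Proposition~\ref{prop:finite-finite}, where an integer was forced to be $0$. The device for detecting that $p$ is invertible is that $1$ is central in any division ring, so for a variable $t$ the ordinary binomial expansion $(1+t)^p=\sum_{i=0}^{p}\binom{p}{i}t^{i}$ holds; in characteristic $p$ it collapses to $1+t^{p}$ because $p\mid\binom{p}{i}$ for $0<i<p$, whereas in every other characteristic its coefficient $\binom{p}{1}=p$ of $t$ is nonzero. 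So I will force a variable $d$ to equal $(1+t)^{p}-1-t^{p}$: this is necessarily $0$ in characteristic $p$, hence cannot take a value distinct from $x_0$, but it is a nonzero polynomial in $t$ over any other field.

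Concretely, write $\{p_1,\dots,p_k\}=\mathbb P\setminus C$, which is finite since $C$ is cofinite. For each $j$, introduce a variable $t_j$, auxiliary variables realizing the iterated products $t_j^{2},\dots,t_j^{p_j}$, a variable $u_j$ with the equation $u_j=x_1+t_j$, auxiliary variables realizing $u_j^{2},\dots,u_j^{p_j}$, and two further additions, $e_j=x_1+t_j^{p_j}$ and $u_j^{p_j}=e_j+d_j$, introducing a variable $d_j$ with value $(1+t_j)^{p_j}-1-t_j^{p_j}$. Let $\Phi_C$ consist of all of these equations for $j=1,\dots,k$, with disjoint auxiliary variables across the blocks, together with the mandatory equations $x_0=0$, $x_1=1$ and the pairwise inequalities. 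Since every auxiliary variable carries an index at least $2$, every equation is of one of the forms permitted in Lemma~\ref{lem:evans-hur}, so $\Phi_C$ satisfies the required constraints. (If $\mathbb P\setminus C$ is empty, take $\Phi_C$ to be just the mandatory equations.)

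For the forward direction, suppose $\Phi_C$ has a solution over a division ring $Q$, and fix $j$. In the solution the block-$j$ variables take the values $t_j^{i}$ and $(1+t_j)^{i}$, and since $1$ commutes with $t_j$, the binomial theorem gives $d_j=(1+t_j)^{p_j}-1-t_j^{p_j}=\sum_{i=1}^{p_j-1}\binom{p_j}{i}t_j^{i}$. If $\operatorname{char}(Q)=p_j$, then each $\binom{p_j}{i}$ with $0<i<p_j$ vanishes in $Q$, forcing $d_j=0=x_0$ and contradicting the inequality $d_j\neq x_0$. Hence $\operatorname{char}(Q)\notin\{p_1,\dots,p_k\}$, that is, $\operatorname{char}(Q)\in C$ (characteristic $0$ is allowed, since $0\in C$).

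For the converse, let $K$ be an infinite field with $\operatorname{char}(K)\in C$, so $\operatorname{char}(K)\neq p_j$ for every $j$. Under an assignment of values $t_1,\dots,t_k\in K$, each auxiliary variable becomes a polynomial in the corresponding $t_j$ with integer coefficients, and one checks that every pairwise difference of these polynomials is a nonzero polynomial over $K$: within a block, most differences keep a leading monomial $t_j^{m}$ or a constant term with coefficient $\pm1$, and $d_j=\sum_{i=1}^{p_j-1}\binom{p_j}{i}t_j^{i}$ keeps the coefficient $p_j\neq 0$ on $t_j$, so in particular $d_j\neq 0$ and $d_j\neq 1$ are given by nonzero polynomials; across blocks, the difference of a block-$j$ variable and a block-$\ell$ variable with $\ell\neq j$ is the difference of two nonconstant polynomials in disjoint variables, hence nonzero. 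The product of these finitely many nonzero polynomials in $t_1,\dots,t_k$ is again nonzero, and since $K$ is infinite it has a non-root in $K^{k}$; such a point yields a solution of $\Phi_C$ over $K$. The subtle point is the choice of device in the first step: one might try to force the characteristic away from $p$ using the two-variable Frobenius identity $(a+b)^{p}=a^{p}+b^{p}$, but this fails because the Frobenius map is not additive on a noncommutative division ring even in characteristic $p$ (Jacobson's formula contributes commutator terms), so such a system would not actually exclude characteristic $p$; using the commuting pair $1,t$ is what makes the argument valid over all division rings.
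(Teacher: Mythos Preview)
Your argument is correct and takes a genuinely different route from the paper. The paper bundles all the excluded primes into the single integer $n=\prod_{p\notin C} p$ and reuses the system $\Phi_n$ of Lemma~\ref{lem:basic-system}: it adds one new variable $v$ with the equation $v=w+y_{n-2}$, so that in any characteristic dividing~$n$ the value of $v$ collapses to $y_1^{n+1}=y_{n+1}$, violating distinctness. Your construction is instead modular: one block per excluded prime $p_j$, with its own free parameter $t_j$, and the ``freshman's dream'' collapse $(1+t_j)^{p_j}=1+t_j^{p_j}$ forces $d_j=x_0$ exactly in characteristic~$p_j$. The trade-off is that the paper's approach is more economical (a single free parameter and one extra variable, at the cost of invoking the somewhat delicate Lemma~\ref{lem:basic-system}), while yours is self-contained and conceptually cleaner, though it uses more variables and requires checking distinctness across blocks in several parameters. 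Your observation that one must use the commuting pair $(1,t)$ rather than a general pair $(a,b)$, because the Frobenius identity fails in noncommutative division rings of characteristic~$p$, is a nice point that the paper's method sidesteps entirely. The only place your write-up is thin is the within-block distinctness check: you should say explicitly that $d_j-t_j^{p_j-1}$ keeps the coefficient $p_j$ on $t_j$ (the leading coefficient $p_j-1$ can vanish in some characteristics), but this is a presentational gap rather than a mathematical one.
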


\begin{proof}
Let $n$ be the product of the finite set of primes not in~$C$
and consider $\Phi_{n}$ from Lemma~\ref{lem:basic-system}. We will
construct a system of equations $\Phi_{C}$, by adding a variable
$v$ and the equation $v=w+y_{n-2}$ to $\Phi_{n}$.

If $Q$ is a division ring of characteristic not in $C$, then by
Lemma~\ref{lem:basic-system}, for any solution in~$Q$,
$y_{n-2}=y_1^{n-2}$ and $w=y_1^{n+1}-y_1^{n-2}$, where we're using the fact
that $n=0$ in~$Q$. By the added equation, then, $v=y_1^{n+1}$ and therefore $v =
y_{n+1}$, so the variables are distinct. We conclude that $\Phi_{C}$ does not
have a solution with distinct values over a division ring with characteristic
not in~$C$.

Conversely, if $K$ is an infinite field whose characteristic is in~$C$, then we
can choose $t$ outside of the finite set $S$ from Lemma~\ref{lem:basic-system}
to get a solution with $y_i = t^i$ and $v = t^{n+1} + n t^{n-1} + nt^{n-2}$. We
only need to justify that $v$ does not coincide with any of the variables used
in $\Phi_n$. First, as a polynomial in~$t$, it has a different degree than all
except $y_{n+1}$, $w_{2n-3}$, and $w_{2n-2}$. The differences between $v$ and
each of these are polynomials with leading coefficients $n$, $1$, and $1$
respectively, so they are distinct elements of $K$, because $n$ is non-zero in
$K$. We conclude that $\Phi_C$ has a solution in~$K$.
\end{proof}

\begin{prop}
\label{prop:cofinite-all} Let $C$ be the union of $\{0\}$ and a
cofinite set of primes. Then there exists a matroid $M$ such that
$\chi_{L}(M)=C$ and $\chi_{A}(M)=\{0\}\cup\mathbb{P}$. 
\end{prop}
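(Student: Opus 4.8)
The plan is to feed the same system $\Phi_{C}$ from Lemma~\ref{lem:cofinite-eqns} into Lemma~\ref{lem:evans-hur}, and then exploit the freedom in the choice of one-dimensional group in part~(2) of Lemma~\ref{lem:evans-hur} to escape the characteristic constraint. Let $n$ be the product of the primes not in~$C$; if that set is empty then $C=\{0\}\cup\PP$ and any matroid realizable over every field works, so assume $n>1$ and let $\Phi_{C}$ and $M$ be as produced by Lemma~\ref{lem:cofinite-eqns} and Lemma~\ref{lem:evans-hur}.

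First I would compute $\chi_{L}(M)$. By part~(1) of Lemma~\ref{lem:evans-hur}, $M$ has a linear realization over an infinite field~$K$ exactly when $\Phi_{C}$ has a solution in~$K$, which by Lemma~\ref{lem:cofinite-eqns} happens precisely when $\mathrm{char}(K)\in C$. Since a matroid is linearly realizable over some field of characteristic~$p$ if and only if it is realizable over the (infinite) algebraic closure of such a field, this already gives $\chi_{L}(M)=C$. In particular $0\in C\subseteq\chi_{L}(M)$, and for every prime $p\notin C$ the matroid~$M$ is \emph{not} linearly realizable in characteristic~$p$.

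Next I would show $\chi_{A}(M)=\{0\}\cup\PP$. The inclusion $C\subseteq\chi_{A}(M)$ is immediate, since a linear realization over a field is in particular an algebraic realization over that field. For the rest, fix an arbitrary prime~$p$ and apply part~(2) of Lemma~\ref{lem:evans-hur} with the one-dimensional group taken to be $G_m$ over a field of characteristic~$p$: its endomorphism ring is~$\ZZ$, so the division ring it generates is~$\QQ$, which has characteristic~$0$ irrespective of~$p$. Since $\QQ$ is an infinite field of characteristic $0\in C$, Lemma~\ref{lem:cofinite-eqns} (equivalently, part~(1) of Lemma~\ref{lem:evans-hur} applied to~$\QQ$) provides a solution of $\Phi_{C}$ in~$\QQ$, hence a linear representation of~$M$ over~$\QQ$; part~(2) of Lemma~\ref{lem:evans-hur} then turns this into an algebraic realization of~$M$ over a field of characteristic~$p$. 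Thus $p\in\chi_{A}(M)$ for every prime~$p$, and combined with $0\in C\subseteq\chi_{A}(M)$ we obtain $\chi_{A}(M)=\{0\}\cup\PP$.

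There is no real computational obstacle here, since all the polynomial bookkeeping was carried out in Lemmas~\ref{lem:basic-system} and~\ref{lem:cofinite-eqns}; the one point requiring care is the interface between the two parts of Lemma~\ref{lem:evans-hur}, namely that a solution of $\Phi_{C}$ over the \emph{field}~$\QQ$ really does count as the linear representation over the division ring demanded by part~(2) — which it does, because $\QQ$ is simultaneously an infinite field and a division ring, so part~(1) applies to it. The conceptual heart of the proposition is simply that $\mathrm{End}(G_m)\cong\ZZ$ has characteristic~$0$ in every characteristic, which decouples algebraic realizability in characteristic~$p$ from the obstruction that keeps~$M$ from being linearly realizable there.
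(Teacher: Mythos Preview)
Your proof is correct and follows essentially the same approach as the paper: construct $M$ from $\Phi_C$ via Lemma~\ref{lem:evans-hur}, read off $\chi_L(M)=C$ from Lemma~\ref{lem:cofinite-eqns}, and then use that $\QQ$ is the fraction field of $\mathrm{End}(G_m)$ in every characteristic to obtain algebraic realizations in all characteristics. Your write-up is slightly more detailed (handling the trivial case $C=\{0\}\cup\PP$ separately and spelling out why one may pass to infinite fields), but the argument is the same.
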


\begin{proof}
Let $\Phi_{C}$ be the system of equations from Lemma~\ref{lem:cofinite-eqns}.
Then, use Lemma~\ref{lem:evans-hur} to construct a matroid $M$.
By these two lemmas, $M$ is realizable over any infinite field of
characteristic contained in $C$ and not realizable over any field
of characteristic not contained in $C$. Therefore, $\chi_{L}(M)=C$.
In particular, $M$ is realizable over $\mathbb{Q}$, which is the
field of fractions of the endomorphism ring of $G_{m}$, so $M$ is
algebraically realizable over any field. 
\end{proof}
\begin{prop}
\label{prop:cofinite-cofinite}Let $C$ be the union of $\{0\}$ and
a cofinite set of primes. Then there exists a matroid $M$ with $\chi_{L}(M)=\chi_{A}(M)=C$. 
\end{prop}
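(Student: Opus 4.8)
The plan is to take the matroid of Proposition~\ref{prop:cofinite-all} and adjoin a single algebraic constraint that destroys its algebraic realizations in the finitely many ``bad'' characteristics outside $C$, without disturbing anything else. Write $C=\{0\}\cup(\mathbb P\setminus F)$ with $F$ finite; if $F=\emptyset$ we are already done by Proposition~\ref{prop:cofinite-all}, which then gives $\chi_L(M)=\chi_A(M)=\{0\}\cup\mathbb P$, so assume $F\neq\emptyset$. The reason the algebraic characteristic set was too large in Proposition~\ref{prop:cofinite-all} is that $\Phi_C$ has a solution over $\mathbb Q$, and $\mathbb Q$ is the division ring generated by $\operatorname{End}(G_m)$ over a field of \emph{any} characteristic. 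More generally, over a positive-characteristic field the division rings $D$ generated by the endomorphism ring of a $1$-dimensional group are: $\mathbb Q$ (for $G_m$); $\mathbb Q$, an imaginary quadratic field, or a definite quaternion algebra over $\mathbb Q$ (for elliptic curves, the quaternion case being definite since the endomorphism algebra carries a positive involution); or the division ring of fractions of the $p$-polynomial ring $K[F]$ (for $G_a$). I would therefore add to $\Phi_C$ equations forcing a new variable $u$ to satisfy $u^2=u+1$. A root of $x^2-x-1$ generates the \emph{real} quadratic field $\mathbb Q(\sqrt5)$, which embeds into none of $\mathbb Q$, an imaginary quadratic field, or a definite quaternion algebra over $\mathbb Q$ --- yet it exists in $\mathbb R$ and in $\overline{\mathbb F_p}$ for every prime $p$, so the linear characteristic set and the $G_a$ case are unaffected.

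Concretely, let $\Phi$ consist of $\Phi_C$ together with two new variables $u,u'$ and the two equations $u'=u\cdot u$ and $u'=u+x_1$, which are of the form allowed by Lemma~\ref{lem:evans-hur} and together force $u^2=u+1$; in any division ring a solution automatically has $u,u'\notin\{0,1\}$. Let $M$ be the matroid produced from $\Phi$ by Lemma~\ref{lem:evans-hur}. For any characteristic in $C$, choose an infinite field $K$ of that characteristic (namely $K=\mathbb C$ in characteristic $0$, or $K=\overline{\mathbb F_p}$ for $p\in\mathbb P\setminus F$): then $x^2-x-1$ has a root $u$ in $K$ (a double root when $\operatorname{char}(K)=5$), and taking the value $t$ of $y_1$ outside a suitable finite set, the solution of $\Phi_C$ provided by Lemma~\ref{lem:cofinite-eqns} together with $u$ and $u'=u^2$ assigns distinct values to all variables, so $\Phi$ has a solution over $K$. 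Hence $\chi_L(M)\supseteq C$. Conversely any solution of $\Phi$ over a division ring restricts to a solution of $\Phi_C$, so by Lemma~\ref{lem:cofinite-eqns} its characteristic lies in $C$; thus $\chi_L(M)=C$, and since a linear realization over a field is in particular an algebraic one, $C\subseteq\chi_A(M)$.

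It remains to prove $\chi_A(M)\subseteq C$, which is the crux. Suppose a prime $q\in\chi_A(M)$. By Lemma~\ref{lem:evans-hur} the system $\Phi$ has a solution over the division ring $D$ generated by $\operatorname{End}(G)$ for some $1$-dimensional algebraic group $G$ over a field of characteristic $q$. If $G=G_a$, then $D$ is the division ring of fractions of $K[F]$, which has characteristic $q$; since $\Phi_C\subseteq\Phi$ also has a solution over $D$, Lemma~\ref{lem:cofinite-eqns} forces $q\in C$. In every remaining case $D$ is $\mathbb Q$, an imaginary quadratic field, or a definite quaternion algebra over $\mathbb Q$. But the solution's $u$ satisfies $u^2=u+1$, so $\mathbb Q(u)\cong\mathbb Q(\sqrt5)$ would be a real quadratic subfield of $D$; this cannot happen if $D$ is $\mathbb Q$ or an imaginary quadratic field, and it cannot happen in a definite quaternion algebra $B$ because $\mathbb Q(\sqrt5)\otimes_{\mathbb Q}\mathbb R\cong\mathbb R\times\mathbb R$ has zero divisors while $B\otimes_{\mathbb Q}\mathbb R\cong\mathbb H$ does not. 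So only the $G_a$ case occurs, giving $\chi_A(M)\subseteq C$ and hence equality. The main difficulty here is exactly the need to defeat all three of $\mathbb Q$, imaginary quadratic fields, and definite quaternion algebras at once with a single equation, rather than just $\mathbb Q$; forcing a real quadratic irrationality like $\sqrt5$ accomplishes this while remaining satisfiable over $\mathbb R$ and over every $\overline{\mathbb F_p}$, and the one remaining family of division rings, coming from $G_a$ in characteristic $q$, is handled automatically by the characteristic restriction already built into $\Phi_C$.
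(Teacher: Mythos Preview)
Your proof is correct and follows the same overall strategy as the paper: start from $\Phi_C$ and adjoin an algebraic constraint that cannot be satisfied in the division ring of $\operatorname{End}(G_m)$ or $\operatorname{End}(E)$, so that any algebraic realization must arise from $G_a$, whose endomorphism ring inherits the characteristic of the base field. The difference is in the choice of constraint. The paper adds three variables $u_1,u_2,u_3$ and forces $u_1^3+u_1-1=0$; since this cubic is irreducible over $\mathbb Q$, $u_1$ has degree $3$ over $\mathbb Q$, and every element of $\mathbb Q$, a quadratic number field, or a quaternion algebra over $\mathbb Q$ has degree at most $2$---so the argument needs no information about \emph{which} quadratic field or quaternion algebra arises. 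You instead add only two variables and force $u^2=u+1$, so that $\mathbb Q(u)\cong\mathbb Q(\sqrt5)$; your exclusion argument then genuinely uses the finer facts that the quadratic endomorphism fields are \emph{imaginary} and that the quaternion endomorphism algebras are \emph{definite} (ramified at $\infty$), since an indefinite quaternion algebra such as $M_2(\mathbb Q)$, or the real quadratic field $\mathbb Q(\sqrt5)$ itself, would contain a root of $x^2-x-1$. Both facts are standard (they follow, as you note, from positivity of the Rosati involution, or directly from Deuring's classification), so your route is valid; it buys one fewer variable and equation at the cost of invoking slightly sharper input about endomorphism algebras. The verification that $u,u'$ can be kept distinct from the $\Phi_C$ variables by excluding finitely many values of $t$ is correct, since $u,u'$ are algebraic over the prime field while the $\Phi_C$ variables other than $0,1$ are nonconstant polynomials in $t$.
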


\begin{proof}
We start with the system $\Phi_{C}$ as in Lemma~\ref{lem:cofinite-eqns},
to which we add the variables $u_{1}$, $u_{2}$, and $u_{3}$ and
the equations $u_{2}=u_{1}\cdot u_{1}$, $u_{3}=u_{2}\cdot u_{1}$,
and $1 = x_{1}=u_{3}+u_{1}$ to get $\Phi$. Let $M$ be a matroid constructed
from this system according to Lemma~\ref{lem:evans-hur}. Any solution
to $\Phi$ in a division ring of characteristic 0 must satisfy
$u_{1}^{3}+u_{1}-1=0$. This polynomial is irreducible in $\mathbb{Q}$,
so the value $u_{1}$ takes must be degree three over $\mathbb{Q}$.
However, the ring of endomorphisms of $G_{m}$ or an elliptic curve
is contained in either the rationals, a quadratic number field, or
a quaternion algebra over $\mathbb{Q}$, and all elements of these
rings have degree at most~2 over $\mathbb{Q}$. Therefore, any algebraic
realization of $M$ must come from the algebraic group $G_{a}$, whose
endomorphism ring has the same characteristic as the field of definition.
Then, by Lemma~\ref{lem:cofinite-eqns}, the characteristic of any
division ring having solutions to $\Phi$, and thus to $\Phi_{C}$
must be contained in $C$, and thus $\chi_{A}(M)\subset C$.

On the other hand, we want to show that $\chi_{L}(M)\supset C$. Let $K$ be a
transcendental extension of an algebraically closed field whose characteristic
is contained in~$C$. Let $u_{1}$ be any root of the polynomial
$u_{1}^{3}+u_{1}-1$ so long as $u_{1}\neq-1$ (which is only possible in
characteristic~$3$, and in characteristic~$3$ there are also other roots). Then,
set $u_{2}=u_{1}^{2}$, and $u_{3}=u_{1}^{3}$, and we claim that $0$, $1$,
$u_{1}$, $u_{2}$, and~$u_{3}$ are distinct. We consider the possible equalities:
First, if $u_{1}$, $u_{2}$, or~$u_{3}$ is zero, then $u_{1}=0$, which is not
possible because the polynomial has a non-zero constant term. Second, if $u_1 =
1$, $u_2 = u_1$, or $u_3 = u_2$, then that implies $u_1=1$, but the defining
polynomial for $u_1$ evaluates to $1 \neq 0$ in all characteristics at $u_1 =
1$. Third, if $u_2 = 1$ or $u_3 = u_1$, then that implies $u_1 = \pm 1$, and
we've assumed that $u_1 \neq -1$ and shown that $u_1 = 1$ is not possible.
Fourth, if $u_3 = 1$, then substituting $u_3 = u_1^3$ into the defining
polynomial yields $u_1 = 0$, which is a contradiction.

Now choose $t$ to be transcendental over the prime field of $K$. Then it is not
a root of $u_1^3 + u_1 -1$ and not equal to $u_2$ or $u_3$, either. Furthermore,
all of the variables defined in $\Phi_C$ are polynomials of $t$, and thus
transcendental over the prime subfield of~$K$, and so distinct from the $u_i$'s.
This shows that all variables are distinct in this solution, and therefore
$C\subset\chi_{L}(M)$, which completes
the proof of the proposition. 
\end{proof}
\begin{prop}
\label{prop:finite-all} Let $C$ be a finite set of primes. Then
there exists a matroid $M$ with $\chi_{L}(M)=C$ and $\chi_{A}(M)=\chi_{F}(M_{P})=\mathbb{P}$. 
\end{prop}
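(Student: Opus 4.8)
The plan is to realize $M$ through Lemma~\ref{lem:evans-hur}, so the whole problem reduces to constructing a system $\Phi$ of the permitted form with two properties: (a) $\Phi$ has a solution in distinct values over an infinite field $K$ if and only if $\operatorname{char} K\in C$; (b) for every prime $p$, $\Phi$ has a solution in distinct values over the division ring $D_{p}$ generated by the ring $K[F]$ of $p$-polynomials over $K=\overline{\mathbb{F}}_{p}$. Given such a $\Phi$, let $M$ be the resulting matroid. By part~(1) of Lemma~\ref{lem:evans-hur}, property~(a) gives $\chi_{L}(M)=C$, since realizability over a field forces it over the (infinite) algebraic closure and so depends only on the characteristic. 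A solution of $\Phi$ over $D_{p}$ is a linear representation of $M$ over $D_{p}$, which is the division ring generated by $\operatorname{End}(G_{a})$ in characteristic~$p$; by part~(2) this is an algebraic realization in characteristic~$p$, so property~(b) gives $\mathbb{P}\subseteq\chi_{A}(M)$. Since $0\notin\chi_{L}(M)=C$, and an algebraic realization in characteristic~$0$ yields a linear one via derivations, $0\notin\chi_{A}(M)$; together with $\chi_{A}(M)\subseteq\mathbb{P}\cup\{0\}$ this forces $\chi_{A}(M)=\mathbb{P}$. Finally $\chi_{A}(M)\subseteq\chi_{F}(M)\subseteq\mathbb{P}$ forces $\chi_{F}(M)=\mathbb{P}$.

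The case $C=\emptyset$ already shows the idea. Take $\Phi$ to be only the mandatory $x_{0}=0$, $x_{1}=1$ and the distinctness inequalities, plus $x_{d}=x_{a}\cdot x_{b}$, $x_{e}=x_{b}\cdot x_{a}$ and $x_{e}=x_{d}+x_{c}$, which force $x_{c}=x_{b}x_{a}-x_{a}x_{b}$. In any field this commutator vanishes, contradicting $x_{c}\neq x_{0}$, so there is no field solution and $\chi_{L}(M)=\emptyset$. But over $D_{p}$ one may take $x_{a}=F$ and $x_{b}=\alpha$ with $\alpha\in\overline{\mathbb{F}}_{p}\setminus\mathbb{F}_{p}$, giving $x_{b}x_{a}-x_{a}x_{b}=\alpha F-F\alpha=(\alpha-\alpha^{p})F\neq 0$; choosing $\alpha$ outside a finite bad set keeps all the variables distinct, so $\Phi$ is solvable over every $D_{p}$, and the reduction yields $\chi_{A}(M)=\chi_{F}(M)=\mathbb{P}$.

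For a nonempty finite $C$, put $n=\prod_{p\in C}p$ (the degenerate case $C=\{2\}$, where $n<3$, being handled by a small variant). Begin with the system $\Phi_{n}$ of Lemma~\ref{lem:basic-system}, whose solutions satisfy $y_{i}=y_{1}^{i}$ and $w=y_{1}^{n+1}+ny_{1}^{n-1}+(n-1)y_{1}^{n-2}$. Proposition~\ref{prop:finite-finite} pins the characteristic to $C$ by adding $y_{n+1}=w+y_{n-2}$, which over any division ring forces $n\,y_{1}^{n-2}(y_{1}+1)=0$, hence $n=0$ — precisely what we must now avoid. Instead, I would attach a weaker gadget to $\Phi_{n}$: new variables (one realized over $D_{p}$ by the Frobenius $F$) and equations that collapse over every commutative ring, still forcing $n\,y_{1}^{n-2}(y_{1}+1)=0$ there (so $\operatorname{char}\in C$), but that leave slack over $D_{p}$. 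Concretely, one takes $y_{1}$ to be a suitable element built from $F$ and matches the nonzero element $\bar n\,y_{1}^{n-2}(y_{1}+1)$ (with $\bar n=n\bmod p\neq 0$) by choosing the remaining scalars in $\overline{\mathbb{F}}_{p}$; this amounts to solving additive equations of Artin--Schreier type $\beta^{p^{k}}-\beta=(\text{prescribed})$, which are always solvable because a nonzero additive polynomial, being a nonconstant polynomial over the algebraically closed field $\overline{\mathbb{F}}_{p}$, is surjective. One then checks, as in the proof of Lemma~\ref{lem:basic-system}, that for generic parameters all finitely many variables are distinct, so $\Phi$ is solvable over $D_{p}$ for every prime $p$, yet field-solvable only in the characteristics in $C$.

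The main obstacle is exactly the design of this last gadget. Since Lemma~\ref{lem:evans-hur} forces every variable to a distinct value, once $C\neq\emptyset$ one cannot literally name a nonzero commutator — that already annihilates all field solutions, as in the $C=\emptyset$ case — so the noncommutativity of $D_{p}$ has to be exploited only implicitly, through the extra scalar $F\in\operatorname{End}(G_{a})$, which no field of characteristic~$p$ provides. Arranging that the characteristic obstruction is genuinely forced over every commutative field outside $C$ while being absorbable over $D_{p}$ for all $p\notin C$, and then verifying the distinctness inequalities in that $D_{p}$-solution, is the crux; and if $D_{p}$ should fail for some prime $p$, one could instead realize $M$ over the quaternion algebra that is the endomorphism ring of a supersingular elliptic curve in characteristic~$p$.
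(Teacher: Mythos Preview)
Your overall strategy is exactly the paper's: build $\Phi$ from $\Phi_{n}$ (with $n=\prod_{p\in C}p$) plus a small noncommutative gadget, so that over any commutative division ring the gadget forces $n(y_{1}^{n-1}+y_{1}^{n-2})=0$ while over $\overline{\mathbb{F}}_{p}[F]$ the noncommutativity absorbs this term. Your reductions to $\chi_{L}(M)=C$, $\chi_{A}(M)=\mathbb{P}$, and $\chi_{F}(M)=\mathbb{P}$ are all sound.

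The gap is that you stop precisely at what you yourself call ``the crux'': you never write down the gadget, and your informal description (Artin--Schreier equations for scalars, noncommutativity ``exploited only implicitly'') does not make it clear that a system of the restricted form in Lemma~\ref{lem:evans-hur} can accomplish this. The paper's device is worth recording because it is short and not entirely obvious. Introduce $u_{1},\ldots,u_{8}$ and the equations
\[
u_{3}=u_{2}+x_{1},\quad u_{4}=u_{1}\cdot u_{3},\quad u_{5}=u_{2}\cdot u_{1},\quad u_{6}=u_{5}+w,\quad u_{7}=u_{6}+y_{n-2},\quad u_{8}=u_{4}+y_{n+1},\quad u_{8}=u_{7}+u_{1}.
\]
Writing $a=u_{1}$, $b=u_{2}$, $t=y_{1}$, these force
\[
ab+a+t^{n+1}=u_{8}=ba+a+t^{n+1}+n t^{n-1}+n t^{n-2},
\]
i.e.\ $ab-ba=n(t^{n-1}+t^{n-2})$. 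Over a field this gives $n=0$; over $\overline{\mathbb{F}}_{p}[F]$ with $p\nmid n$ one takes $t=F$, $b=n\alpha\in\overline{\mathbb{F}}_{p}$, and $a=\beta F^{n-1}+\gamma F^{n-2}$, and then $ab-ba=n\beta(\alpha^{p^{n-1}}-\alpha)F^{n-1}+n\gamma(\alpha^{p^{n-2}}-\alpha)F^{n-2}$, so choosing $\alpha\notin\mathbb{F}_{p^{n-1}}\cup\mathbb{F}_{p^{n-2}}$ and $\beta=(\alpha^{p^{n-1}}-\alpha)^{-1}$, $\gamma=(\alpha^{p^{n-2}}-\alpha)^{-1}$ solves the commutator constraint. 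These are not Artin--Schreier equations for $\beta,\gamma$ but simply inversions, and the distinctness checks are then routine (the $u_{i}$ lie outside $\mathbb{F}_{p}(F)$, where all the $\Phi_{n}$ variables live). Your fallback to quaternion endomorphism rings of supersingular elliptic curves is unnecessary once this works, and would in any case run into the problem that elements there have degree at most~$2$ over~$\mathbb{Q}$.
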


\begin{proof}
Let $n$ be the product of the primes in $C$. Consider the system
of equations $\Phi$ consisting of $\Phi_{n}$ from Lemma~\ref{lem:basic-system}
together with additional variables $u_{1},\ldots,u_{8}$ and the equations:
\begin{align*}
u_{3} & =u_{2}+x_{1}\\
u_{4} & =u_{1}\cdot u_{3}\\
u_{5} & =u_{2}\cdot u_{1}\\
u_{6} & =u_{5}+w\\
u_{7} & =u_{6}+y_{n-2}\\
u_{8} & =u_{4}+y_{n+1}\\
u_{8} & =u_{7}+u_{1}
\end{align*}
Let $M$ be the matroid defined from $\Phi$ by Lemma~\ref{lem:evans-hur}.
If we have any solution to $\Phi$ in a division ring~$Q$, then there
exists $t\in Q$ such that $y_{i}=t^{i}$ and $w=t^{n+1}+nt^{n-2}+(n-1)t^{n-2}$
by Lemma~\ref{lem:basic-system}. If we let $a$ and $b$ be the
values of $u_{1}$ and $u_{2}$, respectively. Then, the other variables
satisfy:
\begin{align*}
u_{3} & =b+1\\
u_{4} & =ab+a\\
u_{5} & =ba\\
u_{6} & =ba+t^{n+1}+nt^{n-1}+(n-1)t^{n-2}\\
u_{7} & =ba+t^{n+1}+nt^{n-1}+nt^{n-2}\\
u_{8} & =ab+a+t^{n+1}\\
 & =ba+a+t^{n+1}+nt^{n-1}+nt^{n-2}
\end{align*}
If $Q$ is commutative, then $ab=ba$ and so the last equation implies
that $nt^{n-1}+nt^{n-2}=0$. Since $t^{n-1}+t^{n-2}$ is non-zero
by Lemma~\ref{lem:basic-system}, then $n=0$, which means that the
characteristic of a commutative field which has solutions to $\Phi$
must be contained in $C$. In particular, $\chi_L(M) \subset C$.

Conversely, let $K=\mathbb{F}_{p}(a,b,t)$, where $p\in C$ and consider
the solution formed by setting $y_{i}=t^{i}$, $u_{1}=a$, $u_{2}=b$,
and assigning the other variables as above. Then the variables $u_{1},\ldots,u_{8}$
are distinct polynomials. Moreover, the variables $u_{i}$ are not
contained in $\mathbb{F}_{p}(t)$, whereas all the variables used
by the system $\Phi_{n}$ are contained in $\mathbb{F}_{p}(t)$, so
these are also distinct. This shows that $\chi_L(M) = C$.

Finally, we want to show that $M$ is algebraically realizable over
the field $\overline{\mathbb{F}}_{p}$ for any prime~$p$. Since
$M$ is linearly realizable when $p\in C$, it is sufficient to consider
the case when $p\not\in C$, so $n$ is non-zero. We construct an algebraic
realization by finding a solution to $\Phi$ over the ring
$\overline{\mathbb{F}}_{p}[F]$, which is the endomorphism ring
of~$G_{a}$. We first choose $\alpha\in\overline{\mathbb{F}}_{p}\setminus\mathbb{F}_{p^{n-1}}\setminus\mathbb{F}_{p^{n-2}}$.
Thus, $\alpha^{p^{n-1}}-\alpha$ and $\alpha^{p^{n-2}}-\alpha$ are
non-zero, so we set $\beta=(\alpha^{p^{n-1}}-\alpha)^{-1}$ and $\gamma=(\alpha^{p^{n-2}}-\alpha)^{-1}$.
Then, let $y_{1}=F$, $u_{1}=\beta F^{n-1}+\gamma F^{n-2}$, $u_{2}=n\alpha$,
and the other variables as: 
\begin{align*}
u_{3} & =n\alpha+1\\
u_{4} & =(n\beta\alpha^{p^{n-1}}+\beta)F^{n-1}+(n\gamma\alpha^{p^{n-2}}+\gamma)F^{n-2}\\
 & =(n\alpha\beta+n+\beta)F^{n-1}+(n\alpha\gamma+n+\gamma)F^{n-2}\\
u_{5} & =n\alpha\beta F^{n-1}+n\alpha\gamma F^{n-2}\\
u_{6} & =F^{n+1}+(n\alpha\beta+n)F^{n-1}+(n\alpha\gamma+n-1)F^{n-2}\\
u_{7} & =F^{n+1}+(n\alpha\beta+n)F^{n-1}+(n\alpha\gamma+n)F^{n-2}\\
u_{8} & =F^{n+1}+(n\alpha\beta+n+\beta)F^{n-1}+(n\alpha\gamma+n+\gamma)F^{n-2}
\end{align*}
All of these are distinct values in $\overline{\mathbb{F}}_{p}(F)$
and satisfy the equations in $\Phi$. Moreover, they are distinct
from the variables used in $\Phi_{n}$, because those all lie in the
subfield $\mathbb{F}_{p}(F)$.  We conclude that $\chi_A(M) = \mathbb P$.
\end{proof}

\begin{proof}[\textbf{Proof of Theorems \ref{thm:char-sets}
and~\ref{thm:char-sets-flock}}]
We consider the different combinations of whether $C_A$ and~$C_L$ are finite or
cofinite.
First, suppose that $C_{A}$ is finite, which implies that $C_{L}\subset C_{A}$
is also finite and that neither $C_{A}$ nor $C_{L}$ contains~$0$.
By Proposition~\ref{prop:finite-all}, there exists a matroid~$M_{1}$
such that $\chi_{L}(M_{1})=C_{L}$ and $\chi_{A}(M_{1})=\mathbb{P}$.
By Proposition~\ref{prop:finite-finite}, there exists another matroid~$M_{2}$
such that $\chi_{L}(M_{2})=\chi_{A}(M_{2})=C_{A}$. Since
the characteristic set of a direct sum is the intersection of the
characteristic sets, $\chi_{L}(M_{1}\oplus M_{2})=C_{L}$ and $\chi_{A}(M_{1}\oplus M_{2})=C_{A}$.

Second, suppose that $C_{A}$ is cofinite, but $C_L$ is finite, which
again implies that $0$ is not in $C_L$ and~$C_A$.
Then by Proposition~\ref{prop:cofinite-cofinite}, there
exists a matroid $M_{1}$ such that $\chi_{L}(M_{1})=\chi_{A}(M_{1})=C_{A}\cup\{0\}$.
By Proposition~\ref{prop:finite-all}, there exists a matroid $M_{2}$ such that
$\chi_{L}(M_{2})=C_{L}$ and $\chi_{A}(M_{2})=\mathbb{P}$. Again, the
characteristic sets of a direct sum are the intersections of the characteristic
sets, so $\chi_L(M_1 \oplus M_2) = C_L$ and $\chi_A(M_1 \oplus M_2) = C_A$.

Third, suppose that $C_A$ and $C_L$ are both cofinite, which implies that $0 \in
C_L \subset C_A$. Similarly, we use Proposition~\ref{prop:cofinite-cofinite} to
construct a matroid $M_1$ such that $\chi_L(M_1) = \chi_A(M_1) = C_A$. Moreover,
by Theorem~\ref{thm:frob-flock-all}, whose proof doesn't use anything in this
section, $\chi_{F}(M_{1})=\mathbb{P}$. By Proposition~\ref{prop:cofinite-all},
there exists a matroid $M_{2}$ such that $\chi_{L}(M_{2})=C_{L}$ and
$\chi_{A}(M_{2})=\mathbb{P}\cup\{0\}$, and consequently $\chi_F(M_2) = \mathbb
P$. Because characteristic sets of direct
sums are the intersections of characteristic sets, $\chi_L(M_1 \oplus M_2) =
C_L$ and $\chi_A(M_1 \oplus M_2) = C_A$. The same is true for Frobenius flock
characteristic sets, by Theorems~4.11, 4.13, and~4.18 in\cite{Bollen}, so
$\chi_F(M_1 \oplus M_2) = \mathbb P$.
\end{proof}

\section{Infinite algebraic characteristic sets}
\label{sec:Infinite-algebraic-char}

The following proposition gives explicit examples of algebraic
characteristic set which are neither finite nor cofinite. Our construction works
similarly to Example 2 in \cite{EvansHrushovski}.

\begin{prop}\label{prop:inf-nocofinite}
Let $n$ be a positive integer. Then there exists a matroid $M_n$ such that 
\[\chi_{A}(M_n)=\left\{ p \in \PP:p\not\equiv1\bmod n\right\}.\]
\end{prop}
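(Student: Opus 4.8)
The plan is to apply Lemma~\ref{lem:evans-hur} to a system of equations $\Phi_n$ built, following the pattern of Example~2 in \cite{EvansHrushovski}, so that the existence of a solution over the division ring generated by the endomorphism ring of a one-dimensional group $G$ over a field of characteristic $p$ is equivalent to the statement that $\FF_p$ contains no element of multiplicative order~$n$. Since $\FF_p^{\times}$ is cyclic of order $p-1$, that statement is exactly $p\not\equiv 1\bmod n$; note that it forces positive characteristic, and is automatically true when $p\mid n$ (then $p-1<n$), which is consistent with those primes being $\not\equiv 1\bmod n$. Granting such a $\Phi_n$, Lemma~\ref{lem:evans-hur} produces the required matroid $M_n$.

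The core of $\Phi_n$ is a ``twisted root of unity'' gadget: variables $\phi$, $z$, $z'$ and an auxiliary $q$ with the equations $q=\phi\cdot z$ and $q=z'\cdot\phi$ (so $\phi z=z'\phi$), together with a chain of equations of the permitted form forcing $z$ to satisfy the $n$-th cyclotomic relation $\Phi_n(z)=0$; I will set up this chain so that the relation is imposed only up to a factor of~$n$ and hence disappears in characteristics dividing~$n$. For $p\nmid n$ a solution over $\overline{\FF}_p[F]$ is obtained by taking $\phi=F$, $z=\zeta$ a primitive $n$-th root of unity, and $z'=F\zeta F^{-1}=\zeta^{p}$; because distinct variables in a solution must take distinct values, this is legitimate precisely when $\zeta^{p}\neq\zeta$, i.e.\ when $p\not\equiv 1\bmod n$. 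When $p\mid n$ the cyclotomic chain is vacuous, so one takes $\phi=F$ and $z$ any element of $\overline{\FF}_p\setminus\FF_p$, with $z'=z^{p}$, which exists since $\overline{\FF}_p\neq\FF_p$. To rule out the other one-dimensional groups I add, as in the proof of Proposition~\ref{prop:cofinite-cofinite}, a block of equations forcing an auxiliary variable to be a root of an irreducible cubic over $\QQ$: this has no root in $\QQ$ (the endomorphism ring of $G_m$), nor in any order of degree at most $2$ over $\QQ$ or in any quaternion algebra over $\QQ$ (the endomorphism rings of elliptic curves, ordinary or supersingular), so none of these support a solution; and since $z'=\phi z\phi^{-1}\neq z$ forces non-commutativity, no commutative ring supports one either, in particular not $\QQ$, not a commutative elliptic-curve endomorphism ring, and not $G_a$ in characteristic~$0$. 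Thus $G_a$ in characteristic $p>0$ is the only remaining option.

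Next I must check that when $p\equiv 1\bmod n$ the system $\Phi_n$ has no solution over the division ring generated by $\overline{\FF}_p[F]$. In any solution $\phi\neq 0$ (as $\phi$ is a distinct variable from the one fixed to $0$), so $z'=\phi z\phi^{-1}$. On the other hand $\Phi_n(z)=0$ gives $z^{n}=1$, and $p\equiv 1\bmod n$ gives $n\mid p-1$, so $z^{p-1}=1$; since $x^{p-1}-1=\prod_{c\in\FF_p^{\times}}(x-c)$ splits over $\FF_p$, the minimal polynomial of $z$ over $\FF_p$ (irreducible, because $\FF_p[z]$ is a domain) is linear, so $z\in\FF_p$. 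But $\FF_p$ is central in $\overline{\FF}_p[F]$, hence in its division ring of fractions, so $z'=\phi z\phi^{-1}=z$, contradicting $z\neq z'$. Therefore $p\notin\chi_A(M_n)$, and together with the previous paragraph this gives $\chi_A(M_n)=\{p\in\PP:p\not\equiv 1\bmod n\}$.

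The main obstacle is not the algebra but the combinatorics of fitting all of this into the rigid syntax of Lemma~\ref{lem:evans-hur}, whose only equations are $x_i=x_j+x_k$ and $x_i=x_j\cdot x_k$ under the stated index restrictions. Encoding ``$\Phi_n(z)=0$ up to a factor of $n$'' requires a long Horner-type chain of such equations, and one must verify that every intermediate variable is forced to take a distinct value \emph{in every characteristic}, including when $z$ is specialized to a primitive $n$-th root of unity; there the intermediate values collide exactly when their defining polynomials differ by a multiple of $\Phi_n$, so, as in the proof of Lemma~\ref{lem:basic-system}, the chain must be chosen so that no two of these polynomials are congruent modulo $\Phi_n$ over any $\FF_p$. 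One also has to confirm that the variables $z'$, $q$, and the root of the cubic never accidentally coincide with a variable of the cyclotomic chain, and handle the degenerate case $n=1$ separately.
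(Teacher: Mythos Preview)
Your overall strategy matches the paper's: apply Lemma~\ref{lem:evans-hur} to a system that forces an $n$-th root of unity, forces non-commutativity via a conjugation gadget $z'=\phi z\phi^{-1}\neq z$, and excludes $G_m$ and elliptic curves by a degree-over-$\QQ$ obstruction. Your analysis of the $p\equiv 1\bmod n$ case (the root lands in $\FF_p$, hence in the center of the Ore ring, forcing $z'=z$) is correct and essentially identical to the paper's.

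Where you diverge is in how you impose the root-of-unity condition, and this is exactly where your proposal stops being a proof and becomes a plan. The paper sidesteps your ``main obstacle'' entirely: rather than encode $\Phi_n(z)=0$ via a Horner chain, it picks $m=kn>6$ and writes the trivially-encoded chain $y_2=y_1\cdot y_1,\ y_3=y_2\cdot y_1,\ \dots,\ x_1=y_{m-1}\cdot y_1$, so that $y_1^m=1$. The built-in distinctness requirement on $x_1,y_1,\dots,y_{m-1}$ is then \emph{precisely} the statement that $y_1$ is a primitive $m$-th root of unity---no cyclotomic coefficients to assemble, no characteristic-by-characteristic distinctness check; the gadget is self-certifying. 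The same choice $m>6$ makes a primitive $m$-th root of unity have degree at least~$3$ over~$\QQ$, which already rules out quaternion orders, so your separate cubic block is unnecessary. The non-commutativity test is simply $z_2=y_k\cdot z_1$, $z_3=z_1\cdot y_k$ with $y_k=y_1^k$ the primitive $n$-th root, and $z_2\neq z_3$ forces $y_k\notin\FF_p$.

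By contrast, you never actually construct your system: you describe what the Horner chain should accomplish, explicitly flag the intermediate-variable distinctness as ``the main obstacle,'' and leave it unresolved; you also do not explain, within the restricted syntax of Lemma~\ref{lem:evans-hur}, how the cyclotomic relation is to be ``imposed only up to a factor of~$n$'' so as to vanish when $p\mid n$, nor how to build the integer coefficients of $\Phi_n$ for composite~$n$. As written this is a sound outline with the hardest step missing. The paper's $y_1^m=1$ device is the clean way to close it.
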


\begin{proof}
Let $k$ be the least integer such that $m = kn$ is greater than~$6$.
We define a new system of equations $\Phi_{n}$ satisfying the conditions
in Lemma \ref{lem:evans-hur}, in terms of the variables $x_{0},x_{1},y_{1},\cdots,$ $y_{m-1},z_{1},z_{2},z_{3}$
by the following equations:
\begin{align*}
x_{0}&=0  & y_2 &= y_1 \cdot y_1 & z_2 &=y_k \cdot z_1 \\
x_{1}&=1 & y_3 &= y_2 \cdot y_1 & z_3 &= z_1 \cdot y_k \\
&&& \;\;\vdots \\
&& y_{m-1} &= y_{m-2} \cdot y_1 \\
&& x_1 &= y_{m-1} \cdot y_1
\end{align*}

Now, use Lemma~\ref{lem:evans-hur} to construct a matroid~$M_n$ from the
equations $\Phi_n$. Any algebraic realization of $M_n$ will yield a solution to these
equations in the division ring of the endomorphism ring of a 1-dimensional
group over~$K$. This solution must satisfy:
\begin{align*}
x_{0} & =  0         & y_2 &= y_1^2 & z_2 &= y_kz_1 \\
x_{1} & =  1 = y_1^m & y_3 &= y_1^3 & z_3 &= z_1 y_k \\
 &&& \;\;\vdots\\
&&y_{m-1} & =  y_1^{m-1}
\end{align*}
Because the $y_i$'s are distinct from $1$ in this solution, $y_1$ must be a
primitive $m$th root of unity. Also, in order for $z_2 = y_k z_1$ and $z_3 =
z_1y_k$ to be distinct, the division ring must be non-commutative, which implies
that $0 \notin \chi_A(M_n)$ and in positive characteristic, a solution must come
from a non-commutative endomorphism ring of an elliptic curve or~$G_m$.

In the former case, the endomorphism ring is contained in a quaternion algebra
over~$\QQ$, all of whose elements have degree at most $2$ over $\QQ$. On the
other hand, a primitive $m$th root of unity for $m > 6$ has degree at least $3$
over $\QQ$, so the elliptic curve case is not possible.

Therefore, an algebraic realization corresponds to a solution to $\Phi_{n}$ in
the division ring~$Q$ of the ring of $p$-polynomials. The element $y_k = y_1^k$
is a primitive $n$th root of unit. If $p \equiv 1 \bmod{n}$, then the polynomial
$t^n -1$ splits in $\FF_p$, and so $y_k$ is contained in $\FF_p$. However, any
element of $\FF_p$ is in center of $Q$, which contradicts the inequality between
$z_2 = y_k z_1$ and $z_3 = z_1 y_k$.
Therefore, if $M_p$ is algebraically
realizable over a field, the field most have positive characteristic $p
\not\equiv 1 \bmod{n}$.

Conversely, suppose that $p \not\equiv 1 \bmod{n}$, and we will construct a
solution to $\Phi_n$ in $\overline \FF_p[F]$. We choose $y_1$ to be a primitive
$m$th root of unit in $\overline\FF_p$ and set $y_i = y_1^k$. In particular,
$y_k$ is a primitive $n$th root of unity, which is not contained in $\FF_p$
because $p \not\equiv 1 \bmod{n}$. We set $z_1 = F$, so that $z_2 = y_k F$ and
$z_3 = F y_k = y_k^pF$ are distinct because $y_k \notin \FF_p$. Thus, $M_p$ is
algebraically realizable over $\overline\FF_p$.
\end{proof}

The proof of Theorem~\ref{thm:char-density} uses the following elementary lemma
from analysis, whose proof we include for the convenience of the reader.

\begin{lem}\label{lem:density}
Let $(x_n)$ be a sequence of positive numbers such that $x_n \rightarrow 0$
as $n \rightarrow \infty$ but $\sum_{n=1}^{\infty} x_n =
\infty$. Then for any $a, \delta > 0$, there exists a finite set of integers
$A$ such that $a - \delta < \sum_{n\in A} x_n < a + \delta$. 
\end{lem}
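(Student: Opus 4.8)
The plan is to build $A$ greedily from a tail of the sequence. Since $x_n \to 0$, I would first fix an index $N$ with $x_n < \delta$ for all $n \ge N$. Deleting the finitely many terms $x_1,\dots,x_{N-1}$ from a divergent series of positive terms leaves a divergent series, so the partial sums $S_k := \sum_{n=N}^{N+k-1} x_n$ (with $S_0 := 0$) form a strictly increasing sequence tending to $+\infty$, and each one-step increment $S_k - S_{k-1} = x_{N+k-1}$ is strictly less than $\delta$.

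Next, let $k$ be the least nonnegative integer with $S_k > a - \delta$; such a $k$ exists because $S_k \to \infty$. I claim that $A := \{N, N+1, \dots, N+k-1\}$ (interpreted as $\emptyset$ when $k = 0$) has the desired property $a - \delta < \sum_{n \in A} x_n < a + \delta$. The lower bound is immediate from $\sum_{n\in A} x_n = S_k > a - \delta$. For the upper bound, if $k \ge 1$ then minimality of $k$ forces $S_{k-1} \le a - \delta$, and hence $S_k = S_{k-1} + x_{N+k-1} < (a - \delta) + \delta = a < a + \delta$; while if $k = 0$ then $S_k = 0$ and the inequality $0 > a - \delta$ forces $a < \delta$, so in particular $0 < a + \delta$ as well.

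I do not anticipate a genuine obstacle, as the argument is entirely elementary; the one point to get right is that the final increment of the greedy process is small enough that the partial sum cannot overshoot $a$ by as much as $\delta$, which is precisely what the choice of $N$ (using $x_n \to 0$) guarantees. The only bookkeeping nuisance is the degenerate case $a \le \delta$, handled above by allowing $A$ to be empty together with the convention that an empty sum equals $0$.
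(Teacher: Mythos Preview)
Your proof is correct and follows essentially the same greedy construction as the paper: pick a tail where the terms are small, then take the shortest initial segment of that tail whose sum exceeds $a-\delta$. The only cosmetic differences are that the paper requires $x_n < 2\delta$ on the tail (yielding $S_k < a+\delta$) whereas you require $x_n < \delta$ (yielding the slightly sharper $S_k < a$), and that you explicitly handle the degenerate case $a < \delta$ by allowing $A=\emptyset$, which the paper glosses over.
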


\begin{proof}
Let $N$ be such that $x_n<2 \delta$ for all $n\geq N$.
Let $M\geq N$ be the minimal index such that $\sum_{n=N}^{M} x_n > a -
\delta$, which exists since $\sum_{n=N}^{\infty}
x_n=\infty$. Then, by minimality, $\sum_{n=N}^{M-1} x_n \leq a- \delta$, so
\[
\sum_{n=N}^{M} x_n = \sum_{n=N}^{M-1} x_n + x_M < (a-\delta) + 2 \delta <
a+\delta. \]
    Thus $A = \{ N,N+1,\ldots,M \}$ is a set as in the lemma statement.
\end{proof}

\begin{proof}[\textbf{Proof of Theorem \ref{thm:char-density}}]
Let $q$ be a fixed prime and $M_q$ be the matroid obtained from the Proposition
\ref{prop:inf-nocofinite} with $\chi_A(M_q)=\left\{ p\text{ prime
}:p\not\equiv1\bmod q\right\}$. By Dirichlet's theorem on arithmetic
progressions, the set of primes $p$ such that $p \equiv 1\bmod q$ has natural
density $1/(q-1)$ and therefore, $\chi_A(M_q)$ has natural density $(q-2)/(q-1)$.

More generally, for any finite set $S$ of primes, the algebraic characteristic
set of the direct sum $\bigoplus_{q \in S} M_q$ is the set of primes~$p$ such
that $p \not\equiv 1 \bmod{q}$ for all $q$ in~$S$. By the Chinese Remainder
Theorem, there are $\prod_{q \in S} (q-2)$ non-zero congruence classes modulo
$\prod_{q \in S} q$ which satisfy these congruence inequalities for all $q \in
S$. Therefore, by Dirichlet's theorem on arithmetic progression, the natural
density of $\chi_A(\bigoplus_{q \in S} M_q))$ is $\prod_{q \in S} (q-2)/(q-1)$.

Now, we proceed to find a suitable set $S$.
We let $q_n$ denote the $n$th prime, and set
\[x_n = -\log \left( \frac{q_n-2}{q_n-1} \right)=-\log \left( 1 - \frac{1}{q_n-1}
\right) \geq \frac{1}{q_n - 1} \geq \frac{1}{q_n}.\]
Then $x_n \rightarrow 0$ since $q_n \rightarrow \infty$, and since
$\sum_{n=1}^\infty 1/q_n$ diverges, so does $\sum_{n=1}^\infty x_n$.
Therefore, Lemma~\ref{lem:density} with $a = -\log \alpha$ and $\delta =  \log
(\alpha + \epsilon) - \log(\alpha)$ gives us a finite set $A$ such that
\[\left\lvert a - \sum_{n \in A} -\log \frac{q_n-2}{q_n-1} \right\rvert < \delta.\]
Because $\log$ is a concave function, $\delta < \log(\alpha) - \log(\alpha -
\epsilon)$, which implies
\[\left\lvert \alpha - \prod_{n \in A} \frac{q_n-2}{q_n-1} \right\rvert < \epsilon\]
Then, consider $M = \bigoplus_{n \in A}{M_{q_n}}$, and we have shown that the
density of $\chi_A(M)$ is $\prod_{n \in A} (q_n-2)/(q_n-1)$,
 and so $\lvert d\left(\chi_A(M) \right) - \alpha\rvert < \epsilon$.
\end{proof}

\section{Stretching Frobenius flocks \label{sec:Constructing-Frobenius-flocks}}

In this section, we prove Theorem~\ref{thm:frob-flock-all}, establishing the
existence of Frobenius flocks for any matroid which is linear over a field of
characteristic~0 and Theorem~\ref{thm:frob-dual}, proving that the Frobenius
flock representability of a matroid is closed under duality. Both results use
the same technical tool, which is a way of stretching linear flocks, which are a
more general object than Frobenius flocks.

For the definition of the linear flock, we need notations and definitions
of deletion and contraction for vector spaces. Let $E$ be a finite set and $K$ a field.
For $v\in K^{E}$, and $I\subseteq E$, define
$v_{I}\in K^{I}$ be the restriction of $v$ to the coordinates indexed by $I$ and
for a linear subspace $V\subseteq K^{E}$ and $I\subseteq E$ define
deletion and contraction to be
\[
V\setminus I=\left\{ v_{E\setminus I}\mid v\in V\right\} \text{ and } V/I=\ensuremath{\left\{ v_{E\setminus I}\mid v\in V,v_{I}=0\right\} } , 
\]
respectively, 
both of which are subspaces of $K^{E - I}$. Since $V\setminus I$ is the projection
of $V$ to $K^I$, and $V/(E - I)$ is the kernel of that projection, the
rank-nullity theorem implies that $\dim V
\setminus I + \dim V/(E-I) = \dim V$. It is also easy to see that when applied
to disjoint sets, deletion, and contraction commute with each other, and also
that multiple deletions or contractions can be combined.

Each vector space $V \subseteq K^E$ defines a matroid whose bases are the sets $B$ such that $V \setminus (E - B) = K^{B}$. We denote it by $M(V)$. The deletion and contraction of vector spaces are closely related to deletion and contraction of matroids. For instance, for any $I\subseteq E$, $M(V/I)=M/I$ and $M(V\setminus I)=M\setminus I$.

Now suppose that $\phi$ is an automorphism of $K$. Then for any $v\in K^{E}$
we can define an action of $\phi$ coordinate-wise:
\[
\phi v=\left(\phi\left(v_{i}\right)\right){}_{i\in E}
\]
and for a vector space $V\subseteq K^{E}$, we have $\phi V=\left\{ \phi v\mid
v\in V\right\} $, which is also a vector space.

Following \cite[Def.~4.1]{Bollen}, a $\phi$\emph{-linear flock} of $E$ over $K$ is defined to be a map $V_{\bullet}\colon\alpha \mapsto V_{\alpha}$
which assigns a $d$-dimensional linear subspace $V_{\alpha}\subseteq K^{E}$ to
each $\alpha\in\mathbb{Z}^{E}$, such that :
\begin{enumerate}
\item[(LF1)] $V_{\alpha}/i=V_{\alpha+e_{i}}\setminus i$ for all $\alpha\in\mathbb{Z}^{E}$
and $i\in E$; and 
\item[(LF2)] $V_{\alpha+\mathbf{1}}=\phi V_{\alpha}$ for all $\alpha\in\mathbb{Z}^{E}$.
\end{enumerate}
\noindent Here $e_{i}$ is the $i$th unit vector in $\ZZ^n$ and $\mathbf{1}\in
\ZZ^n$ is the vector whose entries are all $1$.
If $\phi=F^{-1}$, where
$F\colon x\rightarrow x^{p}$ is the Frobenius map, then we call a $F^{-1}$-linear flock a
\emph{Frobenius flock} \cite[Sec.~4]{BollenDraismaPendavingh}.

For each $\alpha \in \ZZ^n$, the vector space $V_\alpha$ defines a matroid
$M(V_\alpha)$ whose bases are the $d$-element sets $B$ such that $V \setminus (E
- B) = K^{B}$. The union of these sets of bases, for all $\alpha \in \ZZ^n$, is
also a matroid, which we call the support matroid of $V_\alpha$
\cite[Lem.~17]{BollenDraismaPendavingh}. Let $M$ be a matroid. If there exists a
Frobenius flock $V_\bullet$ with support matroid $M$, then  $V_\bullet$ is a
\emph{Frobenius flock representation} of~$M$.

We now establish a lemma allowing us to stretch Frobenius flock
representations:
\begin{lem}
\label{lem:123}Let $V_{\bullet}$ be a $\phi$-linear flock over a field $K$.
Suppose that $\psi$ is an automorphism of $K$ such
that $\psi^{m}=\phi$. Then, there exists a $\psi$-linear flock $V_{\bullet}'$
where $V_{m\alpha}'=V_{\alpha}$ for all $\alpha\in\mathbb{Z}^{n}$, and whose
support matroid is the same as the support matroid of $V_{\bullet}$.
\end{lem}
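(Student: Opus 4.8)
The plan is to build $V'_\bullet$ by subdividing each cell of $V_\bullet$ by the factor $m$. For $\beta\in\mathbb Z^{n}$ write $\beta=m\gamma+\delta$ with $\gamma=\lfloor\beta/m\rfloor$ (coordinatewise) and $\delta\in\{0,1,\dots,m-1\}^{n}$, and decree $V'_{m\gamma}:=V_{\gamma}$ for every $\gamma$. For the remaining $\beta$ one has to interpolate, inside each cube $\{m\gamma+\epsilon:\epsilon\in\{0,\dots,m\}^{n}\}$, between the old flock values $V_{\gamma+\mathbf 1_{T}}$ ($T\subseteq E$) sitting at its $2^{n}$ corners, in such a way that the whole family satisfies (LF1) and (LF2) for $\psi$. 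The key observation is that the flock axioms pin $V'$ down almost completely. Along an edge of the subdivided cube in direction $i$, running from $V_{\gamma}$ to $V_{\gamma+e_{i}}$, axiom (LF1) together with the endpoints \emph{forces} the $m-1$ intermediate subspaces: if $i$ is not a loop of $M(V_{\gamma})$ then $\dim V_{\gamma}/i=d-1$ forces $e_{i}\in V'_{m\gamma+e_{i}}$ and hence $V'_{m\gamma+e_{i}}=Ke_{i}\oplus\iota_{i}(V_{\gamma}/i)=V_{\gamma+e_{i}}$, after which the edge is stationary (as $i$ is now a coloop); if $i$ is a loop of $M(V_{\gamma})$ then the same dimension bookkeeping forces $V'_{m\gamma+ke_{i}}=V_{\gamma}$ for $k<m$, the jump to $V_{\gamma+e_{i}}$ happening only at the last step. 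On higher-dimensional faces the freedom left by (LF1) is removed by (LF2): since (LF2) identifies $V'_\beta$ with $\psi^{\beta_n}V'_{\beta-\beta_n\mathbf 1}$, it suffices to define $V'$ on the transversal $\{\beta:\beta_{n}=0\}\cong\mathbb Z^{n-1}$ of the $\langle\mathbf 1\rangle$-action, where (LF1) becomes a finite system of deletion/contraction constraints (the relation linking $\beta_{n}=0$ to $\beta_{n}=-1$ being the one twisted by $\psi$), to be solved extending the prescribed values $V'_{m\alpha'}:=V_{\alpha'}$.

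To carry this out I would induct on the number of nonzero coordinates of $\delta$ (equivalently, on the dimension of the faces of the subdivided cube), the base cases being the vertices ($\delta=0$, where $V'$ is given) and the edges (handled by the forced behavior above). For the inductive step one uses that a subspace of $K^{E}$ with prescribed deletion $\setminus i$ is an arbitrary lift of that subspace, so the candidate values form an affine space; the rank–nullity identity $\dim V\setminus I+\dim V/(E-I)=\dim V$ governs the dimensions of these lifts, and the commutation of deletion and contraction in disjoint coordinates — recalled in the paper — is precisely what makes the constraints imposed by the various facets of a face mutually compatible. It is convenient to reduce first to $m$ prime, by stretching in stages: if $m=m_{1}m_{2}$, stretch the $\phi$-flock by $m_{1}$ to a $\psi^{m_{2}}$-flock and then that by $m_{2}$, which keeps the cubes small. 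Then the conclusions are routine: (LF1) and $V'_{m\alpha}=V_{\alpha}$ hold by construction, and $V'$ has the same support matroid as $V_\bullet$ because $M(V'_{m\alpha})=M(V_{\alpha})$ gives one containment of bases while, conversely, every matroid $M(V'_\beta)$ arising inside a refined cell is obtained from neighboring old matroids only by the minor operations already relating consecutive matroids along a flock and by the automorphism $\psi$ (which does not change the matroid), so no new bases appear.

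Axiom (LF2) for $V'$ is where the hypothesis $\psi^{m}=\phi$ is used: translating by $\mathbf 1$ moves a subdivided cell to the adjacent one and advances each coordinate by one unit out of $m$, so it applies $\psi$ exactly once; hence translating by a full cell length $m\mathbf 1$ applies $\psi^{m}=\phi$, which matches $V'_{m(\gamma+\mathbf 1)}=V_{\gamma+\mathbf 1}=\phi V_{\gamma}=\phi V'_{m\gamma}$, and since $\psi$ commutes with deletion and contraction it is compatible with every (LF1)-step in the construction. The genuinely delicate point — and the step I expect to be the main obstacle — is the consistency of the fill-in on faces of dimension $\ge 2$: one must show that the value the construction assigns to an interior lattice point of a cell does not depend on how one builds up to it. A naive ``one coordinate at a time in a fixed order'' recipe is \emph{not} consistent (it commits too early to the coloop normal form $Ke_i\oplus\iota_i(V_\gamma/i)$ and then violates (LF1) transverse to $i$, e.g. already for $U_{1,2}$), so the construction must match the flock's data on the entire boundary of each face simultaneously, and this confluence has to be extracted from the commutation of the minor operations together with (LF2).
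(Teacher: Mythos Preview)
Your proposal identifies the right architecture --- subdivide by $m$, prescribe $V'_{m\alpha}=V_\alpha$, and fill in --- but it stops precisely at the hard part. You explicitly flag the confluence on faces of dimension $\ge 2$ as ``the main obstacle'' and say only that it ``has to be extracted from the commutation of the minor operations together with (LF2)''. That is not a proof: you have neither a candidate value for $V'_\beta$ at a general $\beta$ nor an argument that any two fill-in procedures agree. Your own observation that the naive coordinate-by-coordinate recipe fails shows that the existence of a consistent fill-in is genuinely at stake, so an appeal to commutativity of deletion and contraction is not enough without saying exactly which subspace you build.

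The paper sidesteps the confluence problem entirely by writing down a closed formula. Writing $\beta=m\alpha+(r_1,\dots,r_n)$ with $0\le r_i<m$ and setting $I_{<k}=\{i:r_i<k\}$, $I_{>k}=\{i:r_i>k\}$, it defines
\[
V'_\beta \;=\; \bigoplus_{k=0}^{m-1}\,\psi^{k}\bigl(V_\alpha/I_{>k}\setminus I_{<k}\bigr)\ \subset\ \bigoplus_{k=0}^{m-1} K^{I_k}\cong K^{E},
\]
and then verifies (LF1), (LF2), and equality of support matroids by direct computation. With this formula in hand your edge analysis, your reduction to a transversal of the $\langle\mathbf 1\rangle$-action, and your staged reduction to $m$ prime are all unnecessary; and the support-matroid step becomes a single containment argument using that contraction is contained in deletion.

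A smaller correction: on an edge with $i$ not a loop of $M(V_\gamma)$ you correctly get $V'_{m\gamma+e_i}=Ke_i\oplus\iota_i(V_\gamma/i)$, but this is \emph{not} equal to $V_{\gamma+e_i}$ in general (take $E=\{1,2\}$, $V_\gamma=V_{\gamma+e_1}=\operatorname{span}(1,1)$: your formula gives $Ke_1$, not $\operatorname{span}(1,1)$). The edge of $V'_\bullet$ is stationary at $Ke_i\oplus\iota_i(V_\gamma/i)$ for $1\le k\le m-1$ and only meets $V_{\gamma+e_i}$ at the far endpoint $k=m$; this matches the closed formula above but not your equality.
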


\begin{proof}
Let $\beta\in\mathbb{Z}^{n}$, and write $\beta=m\alpha+(r_{1},\ldots,r_{n})$
where $0\leq r_{i}<m$ and $\alpha\in\mathbb{Z}^{n}$. For any $0 \leq k < m$, we define the sets
$I_{<k}=\left\{ i:r_{i}<k\right\} $, $I_{>k}=\left\{ i:r_{i}>k\right\} $
and $I_{k}=\left\{ i:r_{i}=k\right\} $.

Now let us define the $K$-vector space
\[
V_{\beta}'=\bigoplus_{k=0}^{m-1}\psi^{k}V_{\alpha}/I_{>k}\setminus I_{<k},
\]
and we claim that as $\beta$ ranges over all elements of $\ZZ^n$, $V_{\bullet}'$
defines a $\psi$-linear flock. Note that a term $\phi^k
V_{\alpha}/I_{>k}\setminus I_{<k}$ in the definition of $V_{\beta}'$ is a
subspace of $K^{I_{k}}$ and so the direct sum gives a vector subspace of $K^E$
via the isomorphism $K^E \cong\oplus_{k=0}^{m-1} K^{I_{k}}$. Also, if $\beta = m
\alpha$, meaning that $r_i = 0$ for all $i$, then only the $k=0$ summand of the
definition of $V_\beta'$ is non-trivial, and this shows that $V_{m\alpha}' =
V_{\alpha}$.

As noted above, the rank-nullity theorem implies that 
\[d = \dim V_\alpha = \dim V / I_{> 0} + \dim V \setminus I_{0}.\]
By induction, and because the sets $I_k$
partition $E$, $d = \sum_{k=0}^{m-1} \dim V_\alpha / I_{>k} \setminus I_{<k}$,
which implies that $\dim V_{\beta}' = d$.

We check the axiom (LF2) of a linear flock first. Consider
\[
\beta+\mathbf{1}=m\alpha'+(r_{1}',\ldots,r_{n}')
\]
and if we define $\alpha'=\alpha+e_{I_{m-1}}$, $I'_{j}=\left\{ i:r'_{i}=j\right\} =I_{j-1}$
for $1\leq j\leq m-1$ and $I'_{0}=I_{m-1}$, then similarly to the decomposition
$\beta' = m \alpha' + (r_1', \ldots, r_n')$, where $r_i = j$ if and
only if $i \in I_j'$. In addition, we also define $I_{<k}' = \bigcup_{j < k}
I_{j}'$ and $I_{>k}' = \bigcup_{j > k} I_{j}'$, which means that $I_{<k}' =
I_{<k-1} \cup I_{m-1}$ and $I_{>k}' = I_{>k-1} - I_{m-1}$, where $-$ denotes the
set difference, to distinguish it from matroid deletion.

For $I\subseteq E$, the following generalization holds in analogy
with Lemma 9 of \cite{BollenDraismaPendavingh}, 
\begin{lyxlist}{00.00.0000}
\item [{(LF1')}] $V_{\alpha}/I=V_{\alpha+e_{I}}\setminus I\text{ for all }\alpha\in\mathbb{Z}^{n}\text{ and }I\subseteq\left\{ 1,2,\ldots,n\right\} $
where $e_{I}=\sum_{i\in I}e_{i}$. 
\end{lyxlist}

Then, we have
\begin{align*}
V_{\beta+\mathbf{1}}'  = &  \bigoplus_{k=0}^{m-1}\psi^{k}V_{\alpha'}/I'_{>k}\setminus I'_{<k}
&& \mbox{by definition of $V_{\bullet}'$}\\
  = & \left(\bigoplus_{k=1}^{m-1}\psi^{k}V_{\alpha+e_{I_{m-1}}}/\left(I_{>k-1} - I_{m-1}\right)\setminus\left(I_{<k-1}\cup I_{m-1}\right)\right)\\
 & \oplus V_{\alpha+e_{I_{m-1}}}/I_{<m-1}
&&\mbox{by the above identities }\\
  = & \left(\bigoplus_{k=1}^{m-1}\psi^{k}V_{\alpha}/I_{>k-1}\setminus I_{<k-1}\right)\oplus V_{\alpha+\mathbf{1}}\setminus I_{<m-1}
&&\mbox{by (LF1)} \\
  = & \left(\bigoplus_{k=1}^{m-1}\psi^{k}V_{\alpha}/I_{>k-1}\setminus I_{<k-1}\right)\oplus\phi V_{\alpha}\setminus I_{<m-1}
&&\mbox{by (LF2)} \\
  = & \left(\bigoplus_{k=1}^{m-1}\psi^{k}V_{\alpha}/I_{>k-1}\setminus I_{<k-1}\right)\oplus\psi\cdot\psi^{m-1}V_{\alpha}\setminus I_{<m-1}
&&\mbox{because $\phi = \psi^m$} \\
  = & \psi V_{\beta}'
&&\mbox{by definition of $V_{\beta}'$}
\end{align*}
This completes the proof of (LF2).

Now we consider the axiom (LF1), which says that $V_\beta/i =
V_{\beta+e_i} \setminus i$. We first consider the case when $i \notin
I_{m-1}$ and let $j = r_i$, so that $i \in I_j$. Therefore, the vector
$\beta+e_i$ can be written as $m \alpha + (r_1', \ldots, r_n')$, where $r_1',
\ldots, r_n' < m$ and $r_k' = r_k$ unless $k=i$ in which case $r_i' = r_i + 1$.
Then, if $I_{<k}' = \{i : r_i' < k\}$ and $I_{>k}'= \{i : r_i' > k\}$, as usual,
then $I_{<j+1}' = I_{<j+1} - \{i\}$ and $I_{>j}' =
I_{>j} \cup \{i\}$, but other than these two exceptions, $I_{<k}' = I_{<k}$ and
$I_{>k}' = I_{>k}$. Therefore, the definition of $V_{\bullet}'$ gives us:

\begin{align*}
V_{\beta+e_{i}}'=&\left(\bigoplus_{{\substack{k=0\\
k\neq j,j+1
}
}}^{m-1}\psi^{k}V_{\alpha}/I_{>k}\setminus
I_{<k}\right)
\oplus\big(\psi^{j}V_{\alpha}/\left(I_{>j}\cup\left\{ i\right\}
\right)\setminus I_{<j}\big)\\
&\oplus\big(\psi^{j+1}V_{\alpha}/I_{>j+1}\setminus\left(I_{<j+1} - \left\{
i\right\} \right) \big).
\end{align*}

The deletion of the $i$th component only affects the summand contained in
$K^{E_j}$, which is the last summand, so by combining the deletions:
\begin{align*}
V_{\beta+e_{i}}'\setminus i =&\left(\bigoplus_{{\substack{k=0\\
k\neq j,j+1
}
}}^{m-1}\psi^{k}V_{\alpha}/I_{>k}\setminus I_{<k}\right)
\oplus\big(\psi^{j}V_{\alpha}/\left(I_{>j}\cup\left\{ i\right\} \right)\setminus
I_{<j} \big)\\
&\oplus\big(\psi^{j+1}V_{\alpha}/I_{>j+1}\setminus I_{<j+1} \big) \\
=&\left(\bigoplus_{{\substack{k=0\\ k\neq j}}}^{m-1} \psi^k V_\alpha/I_{>k}
\setminus I_{<k} \right)
\oplus \big( \psi_J V_{\alpha}/(I_{>j} \cup \{i\}) \setminus I_{<j} \big) \\
=&\left(\bigoplus_{k=0}^{m-1} \psi^k V_\alpha/I_{>k}
\setminus I_{<k} \right) / \{i\} = V_{\beta}' /\{i\},
\end{align*}
because the contraction of $\{i\}$ only affects the $k=j$ summand. 
This completes the proof of~(LF1) when $i \not\in I_{m-1}$.

Now suppose that $i\in I_{m-1}$. In this case
$\beta+e_{i}=m\alpha'+(r_{1}',\ldots,r_{n}')$ where $\alpha'=\alpha+e_{i}$ ,
$I'_{k}=\left\{ i:r'_{i}=k\right\} =I_{k}$
for $k\neq0,m-1$, $I'_{m-1}=I_{m-1}\setminus\left\{ i\right\} $
and $I'_{0}=I_{0}\cup\left\{ i\right\} $. Then, 
\begin{eqnarray*}
V_{\beta+e_{i}}' & = &
\left(\bigoplus_{k=1}^{m-2}\psi^{k}V_{\alpha+e_{i}}/\left(I_{>k} - \left\{ i\right\} \right)\setminus\left(I_{<k}\cup\left\{ i\right\} \right)\right)\oplus
\big(V_{\alpha+e_{i}}/\left(I_{>0} - \left\{ i\right\} \right)\big)\\
 & & \oplus\big(\psi^{m-1}V_{\alpha+e_{i}}\setminus\left(I_{<m-1}\cup\left\{
i\right\} \right)\big)\\
V_{\beta+e_{i}}'\setminus i & = & \left(\bigoplus_{k=1}^{m-2}\psi^{k}V_{\alpha+e_{i}}/\left(I_{>k}\setminus\left\{ i\right\} \right)\setminus\left(I_{<k}\cup\left\{ i\right\} \right)\right)
\oplus \big(V_{\alpha+e_{i}}/\left(I_{>0} - \left\{ i\right\}) \setminus \{i\} \right)\big)\\
 &  & \oplus\big(\psi^{m-1}V_{\alpha+e_{i}}\setminus\left(I_{<m-1}\cup\left\{
i\right\} \right)\big)\\
% \end{eqnarray*}
% \begin{eqnarray*}
V_{\beta+e_{i}}'\setminus i & = & \left(\bigoplus_{k=1}^{m-2}\psi^{k}V_{\alpha}/I_{>k}\setminus
 I_{<k}\right)\oplus \big(V_{\alpha}/I_{>0}\big)\oplus\big(\psi^{m-1}V_{\alpha}\setminus
 I_{<m-1} / \{i\}\big)\\
  & &\hspace{5.5cm} \left(\mbox{by (LF1) in each summand}\right)\\
& = & \left(\bigoplus_{k=0}^{m-1}\psi^{k}V_{\alpha}/I_{>k}\setminus I_{<k}\right)/\{i\}\\
& = & V_{\beta}'/i,
\end{eqnarray*}
which completes the proof of (LF1) and thus that $V_{\bullet}'$ is a matroid flock.

Finally, we want to show that the support matroids of $V_{\bullet}$ and $V_{\bullet}'$
are the same. Since $V_{m \alpha}' = V_{\alpha}$, any basis of the support
matroid of $V_{\bullet}$ will also be a basis of the support matroid of
$V_{\bullet}'$. For the converse, we suppose that
$\beta$ is any coordinate in $\ZZ^n$ and $B$ is any subset of~$E$. Then, with
$\alpha$, $I_{>k}$, and $I_{<k}$ as before,
\[ V_{\beta}' = \bigoplus_{k=0}^{m-1} \psi^k V_{\alpha}/I_{>k} \setminus I_{<k}
\]
and
\[ V_{\beta}' / (E - B) = \bigoplus_{k=0}^{m-1} \psi^k V_{\alpha}/I_{>k} / (I_k
- B) \setminus I_{<k}. \]

The deletion of a vector space always contains the contraction of the same set,
and thus, \begin{align*} V_{\beta}' / (E-B) &\subset \bigoplus_{k=0}^{m-1} \psi^k V_{\alpha} / I_{>k} / (I_k -
B) / (I_{<k} - B) \setminus (I_{<k} \cap B) \\
&= \bigoplus_{k=0}^{m-1} \psi^k V_{\alpha} / (E -B) / (I_{>k} \cap B) \setminus
(I_{<k}
\cap B).
\end{align*}
However, this last expression is the same construction that was used to make
$V_\beta'$, but applied to $V_{\alpha} / (E-B)$. Therefore, its vector space
dimension is the same as that of $V_{\alpha} / (E-B)$, which, by the
containment, implies that $\dim
V_\beta' / (E-B) \leq \dim V_{\alpha} / (E-B)$. If $B$ is a basis of the support
matroid of $V_{\bullet}'$, then $\dim V_{\beta}' = \lvert B \rvert$, which means
that $B$ is also a basis of the support matroid of~$V_{\bullet}$. This concludes
the proof
that $V_\bullet$ and $V_\bullet'$ have the same support matroids.
\end{proof}

Using Lemma \ref{lem:123}, we now prove Theorem \ref{thm:frob-flock-all} and
Theorem \ref{thm:frob-dual}. \begin{proof}[\textbf{Proof of Theorem
\ref{thm:frob-flock-all}}] By \cite{Ingleton1971}, if $0\in\chi_{L}(M)$, then
$M$ has a representation over a finite extension of the rationals, i.e.\ a number
field~$K$. Let $\mathcal{O}_{K}$ be the ring of integers in the number
field~$K$. Then, using going-up theorem~\cite[Thm.~20]{Marcus}, for any prime
$p$, there exists a prime ideal $\mathfrak{P}\subset\mathcal{O}_{K}$ such that
$\mathfrak{P}\cap\mathbb{Z}=\left(p\right)$. By \cite[Thm. 14]{Marcus},
$\mathcal{O}_{K}$ is a Dedekind domain and if $\mathfrak{I}$ is any non-zero
ideal in $\mathcal{O}_{K}$, then $\mathcal{O}_{K}/\mathfrak{I}$ is finite. So
$\mathfrak{P}$ is a maximal ideal and $\mathcal{O}_{K}/\mathfrak{P}$ is a finite
field. The containment of $\mathbb{Z}$ in $\mathcal{O}_{K}$ induces a
ring-homomorphism $\mathbb{Z}\rightarrow\mathcal{O}_{K}/\mathfrak{P}$, and the
kernel is $\mathfrak{P}\cap\mathbb{Z}=\left(p\right)$. So, we obtain an
embedding $\mathbb{F}_{p}\rightarrow\mathcal{O}_{K}/\mathfrak{P}$. Then
$\mathcal{O}_{K}/\mathfrak{P}$ is an extension of finite degree over
$\mathbb{F}_{p}$. Thus, $\mathcal{O}_{K}/\mathfrak{P}\cong\mathbb{F}_{p^m}$
for some $n$. Also, any localization of a Dedekind domain at a non-zero prime
ideal is a discrete valuation ring  \cite[Thm. 15, Ch. 16]{DummitFoote}. So,
there exists a valuation $\nu:K^{*}\rightarrow\mathbb{Z}$ whose valuation ring
has residue field isomorphic to~$\FF_{p^m}$. Using this valuation with
\cite[Lem.~3.5]{BollenCartwrightDraisma}, we can construct a linear flock with trivial automorphism over a finite field
$\mathbb{F}_{p^m}$.

Now consider the inverse Frobenius automorphism $F^{-1}\colon x\mapsto x^{-p}$
of $\mathbb{F}_{p^m}$, whose iteration $F^{-m}$ is the trivial automorphism.
Then, using Lemma \ref{lem:123} with $\psi=F^{-1}$,
we have $M$ has a Frobenius flock representation over a field of
characteristic $p$. Therefore, $\chi_{F}(M)=\mathbb{P}$. 
\end{proof}

\begin{proof}[\textbf{Proof of Theorem \ref{thm:frob-dual}}]
Let $V_\bullet \colon \alpha \mapsto V_{\alpha}$ be a Frobenius flock representation over $K$ of $M$. Then the
dual of $V_\bullet$ is a defined as $V^*_\bullet \colon \alpha \mapsto V^\perp_{-\alpha}$
\cite[Def. 4.15]{Bollen}, which form a $F$-linear flock over
$K$ with support matroid $M^*$~\cite[Thm.~4.16]{Bollen}.
By~\cite[Thm.~4.30]{Bollen}, the flock $V^*_\bullet$ is determined by its
skeleton, which is a finite number of matroid realizations, together with a
finite number of
compatibility conditions, all of which are algebraic. Therefore, we can assume
that the skeleton is defined over a finite extension of $\FF_p$, and thus that
$V^*_\bullet$ is a $F$-linear flock over a finite extension of~$\FF_p$.

Since the Frobenius endomorphism $F$ has finite order in a finite extension of
$\FF_p$, then $F^{-m} = F$ for some $m$. Then, using Lemma \ref{lem:123}, we get
a $F^{-1}$-linear flock with the support matroid $M^*$. Therefore, $M^*$ has a
Frobenius flock representation over $K$ which implies  $\chi_F(M) \subset
\chi_F(M^*)$. Furthermore, since $ (M^*) ^* = M$,  we have the inclusion in the
other direction, and so $\chi_F(M^*)
= \chi_F(M)$. 
\end{proof}

\section{Finite Frobenius flock characteristic sets \label{sec:Brylawski-matroids}}

In this section, we give an examples of matroids with finite, non-singleton
Frobenius flock characteristic set, based on the construction
in~\cite{Gordon}.

\begin{defn}
\label{def:bry}Consider a set of primes $\{p_{1},p_{2},\ldots,p_{k}\}$
and let $n=p_{1}\cdots p_{k}+1$ and $s=\lfloor\log_{2}n\rfloor$.
For $0\leq i\leq s$, set $b_{i}=\lfloor n/2^{(s-i+1)}\rfloor.$ Then
$b_{0}=0,b_{1}=1$, $b_{2}=2$ or $3$ and in general, \textbf{$b_{i}=2b_{i-1}$
}or $2b_{i-1}+1.$ The \emph{Brylawski matrix} $N_{n}$, introduced in a slightly
different form
in~\cite{Brylawski1982}, is the $3 \times (2s+6)$
matrix: 
\begin{align*}
\begin{blockarray}{cccccccccccccc}
v_{1} & v_{2}& v_{3} & v_{4} & v_{5} & v_{6} & w_{1}& u_{1}& \cdots &
w_i & u_i & \cdots & w_s & u_s\\
\begin{block}{(cccccccccccccc)}
1 & 0 & 0 & 1 & 1 & 1 & 1 & 0 &  & 1 & 0 &  & 1 & 0\\
0 & 1 & 0 & 1 & 1 & 0 & 2 & 1 & \cdots & 2 & 1 & \cdots & 2 & 1\\
0 & 0 & 1 & 1 & 0 & 1 & 1 & 1 &  & b_{i} & b_{i} &  & b_{s} & b_{s} \\
\end{block}
\end{blockarray}
\end{align*}We call the set of primes $\left\{ p_{1},p_{2},\ldots,p_{k}\right\} $
a \emph{Gordon-Brylawski set}, if for each pair of indices $0 \leq i < j \leq
s$ such that $(i,j) \not\in \{ (0,1), (1,2)\}$,
and each prime $p_i$, the difference $b_j - b_i$ is not $0$ or $\pm 1$ modulo
$p_i$~\cite{Gordon}.
\end{defn}

Note that the $(0,1)$ and possibly $(1,2)$ are the only pairs of indices $(i,j)$
such that $b_j - b_i$ can equal $1$. Since these differences will therefore be
$1$ uniformly modulo all primes $p_i$, this does not cause a problem for the
construction.

\begin{ex}\label{ex:gb-80}
A computation shows that the 80 consecutive primes beginning with 12811987 form
a Gordon-Brylawski set.
\end{ex}

The following proposition proves Theorem~\ref{thm:finite-flock-char}:

\begin{prop}
\label{prop:bry}Let $N_{n}$ be the Brylawski matrix where $n=p_{1}\cdots p_{k}+1$
and $M_{n}$ be the matroid which is linearly represented over $\mathbb{F}_{p_{1}}$
by the matrix $N_{n}$. Then, $\chi_{F}(M_{n})\subseteq\left\{ p_{1},\ldots,p_{k}\right\} $.
If $\left\{ p_{1},\ldots,p_{k}\right\} $ is a Gordon-Brylawski
set, then $\chi_{F}(M_{n})=\left\{ p_{1},\ldots,p_{k}\right\} $. 
\end{prop}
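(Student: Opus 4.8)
The plan is to analyze the Brylawski matrix $N_n$ as a matroid realization and understand which characteristics admit a Frobenius flock with this support matroid. The matrix is constructed so that the crucial combinatorial data is the collection of rank-$2$ restrictions (or equivalently, the collinearity structure) among the columns $v_1, \ldots, v_6, w_1, u_1, \ldots, w_s, u_s$, and the key feature is that the entries $b_i$ of the columns $w_i, u_i$ form a near-doubling sequence. The plan is first to identify what the support matroid $M_n$ actually is: over $\FF_{p_1}$, the matrix realizes certain circuits, and we need to know precisely which $3$-element subsets are dependent. The ``standard'' arguments for Gordon's matroids (following \cite{Gordon} and \cite{Brylawski1982}) show that the combinatorial structure forces, in any Frobenius flock realization, a single underlying linear realization over a field $L$ together with the constraint that some element $\zeta \in L$ — essentially the ``slope'' recorded by the doubling sequence — satisfies $\zeta^n = \zeta$ (coming from the chain $b_0 = 0, b_1 = 1, \ldots$ cycling back via $x_1 = y_{m-1} \cdot y_1$-type relations implicit in the matrix design), hence $\zeta^{n-1} = 1$, i.e. $\zeta^{p_1 \cdots p_k} = 1$.

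The second step is the containment $\chi_F(M_n) \subseteq \{p_1, \ldots, p_k\}$. Here I would argue that in characteristic $0$ (and in any characteristic $p \notin \{p_1, \ldots, p_k\}$) the Gordon-Brylawski non-degeneracy conditions — that $b_j - b_i \not\equiv 0, \pm 1 \pmod{p_i}$ for the relevant pairs — become obstructions: the columns that are supposed to be distinct points of the matroid would collide, or the forced relation $\zeta^{n-1} = 1$ together with the inequalities among the $b_i$ would have no solution. More precisely, a Frobenius flock realization gives, via the flock-to-linear-realization passage of \cite{BollenDraismaPendavingh}, a linear realization of one of the flock matroids over a field of characteristic $p$; tracking the support matroid $M_n$ forces the doubling chain, and the only characteristics in which the chain closes up consistently with all the prescribed non-incidences are those $p$ dividing $n - 1 = p_1 \cdots p_k$. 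This is where Definition~\ref{def:bry}'s technical condition is used: the pairs $(i,j) \notin \{(0,1),(1,2)\}$ are exactly those for which $b_j - b_i = \pm 1$ would be a genuine extra coincidence, and excluding $0$ rules out repeated columns.

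The third step is the reverse inclusion $\{p_1, \ldots, p_k\} \subseteq \chi_F(M_n)$ under the Gordon-Brylawski hypothesis. For each $p_i$ in the set, $M_n$ is by definition linearly realizable over $\FF_{p_i}$ (one checks that specializing $N_n$ to $\FF_{p_i}$ produces exactly the support matroid — here is where the non-degeneracy conditions modulo $p_i$ guarantee that no \emph{extra} dependencies appear, so the matroid realized over $\FF_{p_i}$ is the same combinatorial object $M_n$), and a linear realization over a field trivially gives a (constant) Frobenius flock, so $p_i \in \chi_F(M_n)$. Combined with the containment from the second step, this gives equality. I expect the main obstacle to be the second step: carefully extracting from an \emph{arbitrary} Frobenius flock realization (not just a linear one) the rigidity that pins down the doubling chain and the relation $\zeta^{n-1}=1$. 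This requires using the structure theory of Frobenius flocks — that the flock is controlled by its skeleton of finitely many linear realizations with algebraic compatibility conditions \cite[Thm.~4.30]{Bollen} — to reduce to a linear-algebraic statement about a single realization, and then running Gordon's argument. The bookkeeping of which triples of columns are collinear in $M_n$, and verifying that the doubling structure is rigidly enforced rather than merely one possibility, is the delicate part; everything else is checking that prescribed non-incidences survive or fail to survive in each characteristic, which is the arithmetic content packaged into the Gordon-Brylawski condition.
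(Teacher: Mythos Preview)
Your overall architecture—use rigidity to pass from the Frobenius flock to a single linear realization, then chase circuits to constrain the characteristic—matches the paper. But the mechanism you identify for the characteristic constraint is wrong. There is no ``slope'' $\zeta$ with $\zeta^{n-1}=1$; you are conflating this with the root-of-unity construction in Section~\ref{sec:Infinite-algebraic-char} (the equation $x_1 = y_{m-1}\cdot y_1$ lives there, not here). The Brylawski mechanism is purely additive: once the first four columns are normalized to the standard $U_{3,4}$ configuration, the $3$-element circuits of $M_n$ force the realization to reproduce the integer entries $b_i$ exactly, column by column, via the doubling $b_i \in \{2b_{i-1}, 2b_{i-1}+1\}$. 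The characteristic is then pinned down by the dependent triple $\{v_1, w_1, u_s\}$, whose $3\times 3$ minor equals $2b_s - 1 = n-1 = p_1\cdots p_k$; since this must vanish in the field, $p \mid p_1\cdots p_k$. No multiplicative closure is involved.

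You also have the role of the Gordon--Brylawski hypothesis inverted. The containment $\chi_F(M_n) \subseteq \{p_1,\ldots,p_k\}$ is asserted in the proposition \emph{without} that hypothesis, and the circuit-chasing argument above uses only that certain triples are dependent in $M_n$ (as defined over $\FF_{p_1}$); the congruence conditions on $b_j - b_i$ never enter. The Gordon--Brylawski condition is used only where you correctly place it in step~3: via \cite[Thm.~5]{Gordon} it gives $\chi_L(M_n) = \{p_1,\ldots,p_k\}$, hence the reverse inclusion. Finally, the rigidity input is more specific than the skeleton theorem \cite[Thm.~4.30]{Bollen} you cite: the first four columns form a $U_{3,4}$, which is rigid by \cite[Lem.~53]{BollenDraismaPendavingh}, and \cite[Lem.~46]{BollenDraismaPendavingh} then yields a single $\alpha$ for which $M(V_\alpha)$ contains that $4$-circuit, from which the column-by-column reconstruction of $N_n$ proceeds. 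The skeleton theorem alone does not isolate one $V_\alpha$ constrained by all the relevant circuits.
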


\begin{proof}
Assume that $M_{n}$ has a Frobenius flock representation over a field
$K$ of characteristic~$p$. Let $A$ be the restriction of $M_n$ to its first 4
elements, corresponding to first for columns of~$N_n$.
Then $A$ is isomorphic to $U_{3,4}$, which is
rigid~\cite[Lem.~53]{BollenDraismaPendavingh}, which means that any valuations
on the matroid $U_{3,4}$ is projectively equivalent to the valuation which is
constant~$0$. Then by \cite[Lem.~46]{BollenDraismaPendavingh}, there exist
$\alpha\in\mathbb{Z}^{E}$ such that $M_{\alpha}=M\left(V_{\alpha}\right)$
contains the elements of $A$ as a circuit. Now we want to show that $V_{\alpha}$
equals the row space of the Brylawski matrix $N_{n}$ over~$\mathbb{F}_{p}$.  Let
$B$ be the matrix representing $V_\alpha$ and $v_1', \ldots v_6', w_1', u_1',
\ldots, w_s', u_s'$ denote the columns of $B$, analogous to the labeling of the
columns of the Brylawski matrix~$N_n$ in Definition~\ref{def:bry}.

As the first
four elements form a circuit, we may use row operations on the matrix $B$ in
such a way that the columns corresponding to this circuit are as in the matrix
$N_{n}$. Since $\left\{ v_{1},v_{2},v_{5}\right\} $ is a circuit, the third
entry in $v_{5}'$ is 0 and since $\left\{ v_{3},v_{4},v_{5}\right\} $ is a
circuit, then the two non-zero entries of $v_{5}'$ are the same. Then by the
column scaling we get that the $v_{5}'$ is the fifth column of the matrix
$N_{n}$. We can similarly conclude that, after scaling the columns $v_6'$ and
$u_1'$ are the same as the corresponding columns of $N_n$. 

The fact that $\{v_2, v_6, w_1\}$ is a circuit forces the first and last
entries of $w_1$ to be the same, and the circuit $\{v_5, u_1, w_1\}$ is a
circuit means the middle entry is the sum of the other two. Therefore, after
scaling, we can assume that $w_1'$ is the same as $w_1$. We now use induction
to show that for $i \geq 2$, $w_i'$ and $u_i'$ are the same as $w_i$ and $u_i$,
after scaling. Here, the circuit $\{v_3, w_1, w_i\}$ forces the first two
entries of $w_i$ to be $1$ and $2$ respectively, after scaling. Then, the one of
the minors
\[\begin{blockarray}{ccc}
v_1 & u_{i-1} & w_i \\
\begin{block}{|ccc|}
1 & 0 & 1 \\
0 & 1 & 2 \\
0 & b_{i-1} & b_i \\
\end{block}
\end{blockarray} = b_i - 2b_{i-1}
\quad\mbox{or}\quad
\begin{blockarray}{ccc}
v_6 & u_{i-1} & w_i \\
\begin{block}{|ccc|}
1 & 0 & 1 \\
0 & 1 & 2 \\
1 & b_{i-1} & b_i \\
\end{block}
\end{blockarray} = b_i - 2b_{i-1} -1
\]
is zero, depending on whether $b_i = 2b_{i-1}$ or $b_i = 2b_{i-1} + 1$ and then
this forces the last entry of $w_i'$ to be $b_i$. Finally, $u_i'$ is forced to
agree with $u_i$, after scaling, because of the circuits $\{v_5, w_i, u_i\}$ and
$\{v_2, v_3, u_i\}$. Therefore, we conclude that $V_\alpha$ is represented by
the Brylawski matrix $N_n$. We also have the minor 
\[\begin{blockarray}{ccc}
v_1 & w_1 & u_s \\
\begin{block}{|ccc|}
1 & 1 & 0 \\
0 & 2 & 1 \\
0 & 1 & b_s \\
\end{block}
\end{blockarray} =
2b_s -1 = n-1 = p_1 \ldots p_k,\]
which corresponds to a circuit of $M_\alpha$, and therefore forces the
characteristic $p$ to be one of $p_1, \ldots, p_k$. 

We have shown that $p=p_{i}$ for some $i$, so $\chi_{F}(M_{n})\subseteq\left\{ p_{1},\ldots,p_{k}\right\} $.
Since the set of primes $\left\{ p_{1},p_{2},\ldots,p_{k}\right\} $
is a Gordon-Brylawski set, then by \cite[Thm. 5]{Gordon}, we
have $\chi_{L}(M_{n})=\left\{ p_{1},\ldots,p_{k}\right\} $, therefore
$\chi_{F}(M_{n})=\left\{ p_{1},\ldots,p_{k}\right\} $. 
\end{proof}

\bibliographystyle{amsalpha}
\bibliography{project1027}

\end{document}